\documentclass[a4paper,leqno]{amsart}
\usepackage[margin=30truemm]{geometry}
\usepackage{fancyhdr,appendix,graphicx,layout}
\usepackage{stix2}
\usepackage{amsmath,amsthm}
\usepackage{enumitem}

\usepackage[numbers]{natbib}
\usepackage{bbm}
\usepackage{bm}
\usepackage{tikz}
\usepackage{tikz-cd}
\usetikzlibrary{positioning,arrows.meta,decorations.markings}
\tikzset{
    cells={font=\everymath\expandafter{\the\everymath\displaystyle}},
}
\usepackage[pagewise]{lineno}

\AtBeginEnvironment{subappandices}{%
\chapter*{Appendix}
\addcontentsline{toc}{chapter}{Appendices}
\counterwithin{figure}{section}
\counterwithin{table}{section}
}

\renewcommand{\emph}[1]{\textcolor{blue}{\textit{#1}}}

\usepackage{hyperref}
\hypersetup{
    colorlinks=true,
    linkcolor=red,  %section,table of contents
    filecolor=blue,  %local file link
    urlcolor=orange, %url
    citecolor=purple   %citation
}
\usepackage{cleveref}

\makeatletter
\newcommand\ReDeclareMathOperator[2]{%
    \begingroup \escapechar\m@ne\xdef\@gtempa{{\string#1}}\endgroup
    \expandafter\@ifundefined\@gtempa
        {\@latex@error{Command \string#1 undefined}\@ehc}
        \relax
    \let\@ifdefinable\@rc@ifdefinable
    \DeclareMathOperator#1{#2}}
\makeatother

\crefname{thm}{Theorem}{Theorems}
\crefname{dfn}{Definition}{Definitions}
\crefname{dfnprop}{Definition-Proposition}{Definition-Proposition}
\crefname{prop}{Proposition}{Propositions}
\crefname{lem}{Lemma}{Lemmas}
\crefname{cor}{Corollary}{Corollaries}
\crefname{clm}{Claim}{Claims}
\crefname{ass}{Assumption}{Assumption}
\crefname{cond}{Condition}{Condition}
\crefname{nota}{Notation}{NOtation}
\crefname{conj}{Conjecture}{Conjecture}
\crefname{fct}{Fact}{Facts}
\crefname{rmk}{Remark}{Remarks}
\crefname{eg}{Example}{Examples}
\crefname{figure}{Figure}{Figures}
\crefname{table}{Table}{Tables}
\crefname{section}{Section}{Sections}
\crefname{subsection}{Subsection}{Subsections}
\crefname{appendix}{Appendix}{Appendices}
\crefname{equation}{}{}
\crefname{main}{Theorem}{Theorems}

\theoremstyle{definition}
\newtheorem{thm}{Theorem}[section]
\newtheorem{main}{Theorem}
\newtheorem{dfn}[thm]{Definition}

\newtheorem{prop}[thm]{Proposition}
\newtheorem{lem}[thm]{Lemma}

\newtheorem{nota}[thm]{Notation}

\newtheorem{rmk}[thm]{Remark}
\newtheorem{eg}[thm]{Example}
\numberwithin{equation}{section}
\makeatletter
\let\c@equation\c@thm
\makeatother

\renewcommand{\a}{\mathfrak a}

\newcommand{\D}{\mathcal{D}}

\newcommand{\K}{\mathcal K}
\renewcommand{\L}{\mathcal L}

\newcommand{\m}{\mathfrak m}
\newcommand{\n}{\mathfrak n}

\renewcommand{\S}{\mathcal S}
\newcommand{\T}{\mathcal T}
\newcommand{\U}{\mathcal U}
\newcommand{\V}{\mathcal V}

\newcommand{\RR}{\mathbf{R}}
\newcommand{\LL}{\mathbf{L}}

\newcommand{\FF}{\mathbb{F}}

\newcommand{\ZZ}{\mathbb{Z}}

\DeclareMathOperator{\Filt}{Filt}
\DeclareMathOperator{\add}{add}

\ReDeclareMathOperator{\top}{top}
\DeclareMathOperator{\simp}{Sim}

\DeclareMathOperator{\ind}{ind}

\DeclareMathOperator{\proj}{proj}

\DeclareMathOperator{\colim}{colim}

\newcommand{\op}{{op}}

\let\mod\relax
\DeclareMathOperator{\mod}{mod}
\DeclareMathOperator{\fl}{fl}

\DeclareMathOperator{\Hom}{Hom}

\DeclareMathOperator{\RHom}{\RR Hom}

\DeclareMathOperator{\REnd}{\RR End}

\DeclareMathOperator*{\ten}{\otimes}

\DeclareMathOperator{\Tor}{Tor}

\DeclareMathOperator{\thick}{thick}

\DeclareMathOperator{\silt}{silt}

\DeclareMathOperator{\SMC}{SMC}
\DeclareMathOperator*{\holim}{holim}
\DeclareMathOperator*{\hocolim}{hocolim}

\DeclareMathOperator{\per}{per}

\DeclareMathOperator{\Dbfl}{\mathcal D^b_{fl}}

\newcommand{\s}{/}

\title{Silting Theory and Derived Base Change}
\author{Riku Fushimi}
\date{\today}

\begin{document}

\begin{abstract}

For finite-dimensional algebras over a field, Koenig and Yang established a bijection between silting complexes and simple-minded collections in the bounded derived category, with further contributions by many authors in various settings.
In this paper, we work over a commutative complete local noetherian ring $(R,\m,k)$ rather than over a field and establish a bijection in this more general setting.

As an application of this generalization, we construct a bijection between silting complexes over a noetherian $R$-algebra $\Lambda$ and silting complexes over $\Lambda\ten^\LL_RS$ for any morphism of commutative complete local noetherian rings $(R,\m,k)\to(S,\n,k)$.
This result generalizes some known results on silting complexes over noetherian algebras.

\end{abstract}
\maketitle

\tableofcontents

%%%%%%%%%%%%%%%%%%%%%%%%%%%%%%%%%%%%%%%%%%%%%%%%%%%%%%%%%%%%%%%%%%%%%%%%%%
\section{Introduction}
Tilting objects play a fundamental role in the study of derived equivalences of rings. Silting objects, introduced in \cite{KV88} as a generalization of tilting objects, provide a framework in which mutation can be performed \cite{AI12}. As in the tilting case, silting objects induce structural data on triangulated categories, such as $t$-structures and co-$t$-structures.

For a finite-dimensional algebra $\Lambda$, Koenig and Yang \cite{KoY14} established a bijection between basic silting objects in $\K^b(\proj\Lambda)$ and simple-minded collections in $\D^b(\mod\Lambda)$, generalizing the result of \cite{KV88}. While this correspondence has been extensively studied over fields, extending it to noetherian algebras is of fundamental importance in connecting algebraic geometry and representation theory. For instance, noncommutative crepant resolutions \cite{VdB04} of a Gorenstein noetherian ring $R$ provide prominent examples of noetherian algebras. Such algebras serve as categorical resolutions of singularities, and understanding their silting theory is indispensable for investigating derived categories.

Motivated by these geometric and algebraic applications, we replace the base field with a commutative complete local noetherian ring $(R,\m,k)$ and establish the silting--simple-minded correspondence in this broader framework. 
More precisely, we work not only with a noetherian $R$-algebra $\Lambda$, 
but with a non-positive locally finitely generated dg $R$-algebra $A$, 
that is, dg $R$-algebra satisfying $H^{>0}(A)=0$ and $H^i(A)\in\mod R$ for all $i\in\ZZ$.

This dg framework arises naturally from the perspective of derived base change. 
Indeed, if $\Lambda$ is a noetherian $R$-algebra and $R\to S$ is a morphism of local noetherian rings, 
then the derived base change $\Lambda\ten_R^{\LL}S$ need not be a noetherian $S$-algebra. 
However, it is always a non-positive locally finitely generated dg $S$-algebra. 
The class of dg algebras considered here is stable under derived base change.

Our first main result establishes the silting–-simple-minded correspondence in this setting.

\begin{main}(=\cref{thm:ST})\label{main:ST}
    Let $A$ be a non-positive locally finitely generated dg $R$-algebra. Then
    there is a bijection between isomorphism classes of basic silting complexes over $A$ and isomorphism classes of simple-minded collections in $\Dbfl(A):=\{X\in\D(A)\mid H^*(X)\in\fl R\}$.
\end{main}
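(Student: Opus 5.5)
The plan is to follow the Koenig--Yang strategy and factor the bijection through silting $t$-structures, reducing the complete local case to a finite-dimensional one by means of the $\m$-adic topology on $R$. First I will fix notation. Let $\per(A)=\thick(A)$, and let $\Dbfl(A)$ be as in the statement. Because $R$ is complete and the cohomology $H^i(A)$ is finitely generated over $R$, the category $\per(A)$ is Hom-finite modulo $\m$. It is also Krull--Schmidt, since endomorphism rings of indecomposable perfect complexes are module-finite over the complete ring $R$, hence semiperfect. So ``basic silting complex'' is meaningful.

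\textbf{Silting to simple-minded.} Given a basic silting complex $M=M_1\oplus\cdots\oplus M_n$ in $\per(A)$, I will form the $t$-structure
\[
\D^{\le0}_M=\{X\in\D(A)\mid \Hom(M,X[i])=0 \text{ for } i>0\}
\]
on $\D(A)$, together with the analogous $\D^{\ge0}_M$. I then restrict it to $\Dbfl(A)$. Set $B=\REnd_A(M)$. This dg algebra is again non-positive and locally finitely generated, and $\RHom_A(M,-)$ gives an equivalence $\D(A)\simeq\D(B)$. This equivalence sends $\per(A)$ to $\per(B)$ and $\Dbfl(A)$ to $\Dbfl(B)$, because $\Dbfl$ can be characterized intrinsically: $X\in\Dbfl(A)$ exactly when $\Hom(P,X[i])$ has finite length for every $P\in\per(A)$ and vanishes for $|i|\gg0$. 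It therefore suffices to treat the standard heart for $M=A$. In that case the heart of $\Dbfl(A)$ is $\fl H^0(A)$, a length category whose simples are the $n$ simple $H^0(A)$-modules $S_1,\dots,S_n$. These are annihilated by $\m$ and are dual to the indecomposable summands of $A$, in the sense that $\Hom(A_i,S_j[p])=\delta_{ij}\delta_{p0}\,\mathrm{End}(S_j)$. I then check that $\{S_i\}$ is a simple-minded collection. Hom-orthogonality and the vanishing of negative self-extensions are immediate from the heart. The generating condition, $\thick\{S_i\}=\Dbfl(A)$, follows by truncating an object of $\Dbfl(A)$ into its finitely many finite-length cohomology modules and taking composition series.

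\textbf{Simple-minded to silting.} This is the step I expect to be the main obstacle, namely inverting the map without the finite-dimensionality used by Koenig--Yang. Given a simple-minded collection $\{X_i\}$ in $\Dbfl(A)$, the extension closure $\Filt(X_i[\ge0])$ together with its right orthogonal gives a bounded $t$-structure on $\Dbfl(A)$ whose heart is a length category with simples $X_i$; this is the standard argument and goes through because Hom-spaces in $\Dbfl(A)$ have finite length. The real difficulty is producing the projectives $M_i\in\per(A)$ with $\Hom(M_i,X_j[p])=\delta_{ij}\delta_{p0}(\cdot)$. Over a field one uses Hom-duality $D\Hom(-,-)$, which is no longer available here. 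I would first try Matlis duality $\Hom_R(-,E(k))$ combined with completeness. One constructs finite-length approximations $M_i^{(m)}$ via the Koenig--Yang/Rickard ``killing'' procedure, or equivalently one constructs the $t$-structure on all of $\D(A)$ as a compactly generated one, generated by the $X_i$ through homotopy colimits. One then shows that the projective $M_i$ is a homotopy limit $\holim_m M_i^{(m)}$ over $\m$-adic truncations, and checks that it is compact by using that $R$ is complete and that its cohomology is finitely generated. Equivalently, one shows that the resulting $t$-structure on $\D(A)$ is intermediate and hence silting, which requires the description of compact objects through the $\m$-adic completion. I will do this first for $A$ with $H^0(A)$ local and then handle the general case.

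\textbf{Conclusion.} Finally I will verify that the two constructions are mutually inverse. Starting from $M$, the collection of simples of the heart recovers $M$ as the direct sum of the corresponding compact projective covers, by Hom-duality and the Krull--Schmidt property. Starting from a simple-minded collection, the construction returns the same heart. Both facts follow from uniqueness of the $t$-structure determined by a heart with finitely many simples, and from the fact that the silting complex is determined up to additive closure by its coaisle.
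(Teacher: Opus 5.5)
\textbf{There is a genuine gap: the simple-minded-to-silting direction is not actually carried out.}

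Your silting-to-simple-minded direction is fine. It coincides with the map $\phi$ of \cref{prop:silt-SMC}(1). The inverse direction is the crux, as you say, but you only list hedged alternatives: a Rickard-type killing procedure with an $\m$-adic homotopy limit, or an intermediate $t$-structure on $\D(A)$. You never define the approximations $M_i^{(m)}$, and the two steps that carry all the difficulty are asserted rather than proved.

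\begin{enumerate}
\item If $M_i=\holim_m M_i^{(m)}$, you cannot read off $\Hom(M_i,\Sigma^pX_j)$ from the finite stages. $\Hom$ out of a homotopy limit does not commute with the limit in general. So even the defining property $\Hom(M_i,\Sigma^pX_j)=\delta_{ij}\delta_{p0}(\cdot)$ is unverified. Such a continuity statement is exactly what \cref{lem:routine}(2) proves for the system $A\ten^\LL_RR/\m^n$, using that $S_A$ has finite length.
\item Your compactness criterion, completeness of $R$ plus finitely generated cohomology, is false. For $R=k$ and $A=k[x]/(x^2)$, the simple module has bounded, finite-dimensional cohomology but is not in $\per(A)$.
\item Intermediacy of a $t$-structure on $\D(A)$ also does not by itself give a compact silting object.
\end{enumerate}
The ``description of compact objects through the $\m$-adic completion'' that you invoke is precisely the missing theorem.

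\textbf{How the paper fills this gap.} It uses Koszul duality instead of an explicit construction of projectives. Let $S_A$ be the top of $H^0(A)$, let $B=\REnd_A(S_A)$, which is a locally finite-length positive dg algebra, and let $\Phi=\RHom_A(-,S_A)$. One shows that $\Phi$ restricts to contravariant equivalences $\Dbfl(A)\simeq\per(B^{\op})$ and $\per(A)\simeq\Dbfl(B^{\op})$, compatible with $\RHom$ from $\per(A)$ to $\Dbfl(A)$.

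The nontrivial point is $\REnd_A(A)\simeq\REnd_{B^{\op}}(\Phi(A))$, and this is where completeness enters:
\begin{enumerate}
\item $A\simeq\holim_n A\ten^\LL_RR/\m^n$.
\item The approximations $\sigma^{\ge -m}(A\ten^\LL_RR/\m^n)$ lie in $\Dbfl(A)$, where $\Phi$ is fully faithful by construction.
\item $\Phi$ turns both the $\m$-adic limit and the truncations into homotopy colimits (\cref{lem:routine}).
\end{enumerate}

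With this duality in hand, a simple-minded collection in $\Dbfl(A)$ becomes one in $\per(B^{\op})$. Its derived endomorphism algebra is positive, so Keller--Nicolas (\cref{prop:proj}) gives a silting object in $\Dbfl(B^{\op})$. Pulling it back along $\Phi$ gives a silting object in $\per(A)$, and compactness is automatic. The commutative square in \cref{thm:ST} then shows that $\phi$ is bijective.

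Your instinct that $\m$-adic limits and truncations are the key tools is right. They need to be used to establish this duality, not to build the projective objects by hand.
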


We next turn to the behavior of silting objects under reduction modulo the maximal ideal. 
For a noetherian $R$-algebra $\Lambda$ and an ideal $I \subseteq \m\Lambda$, Kimura \cite{K} constructed a bijection between two-term silting complexes over $\Lambda$ and two-term silting complexes over $\Lambda/I$. On the other hand, under the assumption that $\Tor_i^R(\Lambda, R/\a^n)=0$ for all $i,n>0$, Gnedin \cite{G} showed that silting complexes over $\Lambda$ are in bijection with silting complexes over $\Lambda \ten_R R/\a$, where $\a\subseteq\m$ is an ideal.

Unifying and extending these two preceding results, we establish a canonical bijection between the isomorphism classes of silting complexes over $\Lambda$ and those over the derived fiber $\Lambda \otimes^\mathbf{L}_R R/\mathfrak{m}$. The primary obstacle to establishing this bijection lies in proving surjectivity; namely, in lifting a silting complex over $\Lambda \ten_R^{\LL} R/\m$ to a silting complex over $\Lambda$. Rather than addressing this lifting problem directly, we establish surjectivity by leveraging the bijection from \cref{main:ST}. 

\begin{main}\label{main:bij-fib}
    Let $A$ be a non-positive locally finitely generated dg $R$-algebra.
    \begin{itemize}
        \item [(1)] (=\cref{prop:silt}) If $M$ is a silting complex over $A$, then $M\ten^\LL_RR/\m$ is a silting complex over $A\ten^\LL_RR/\m$.
        \item [(2)] (=\cref{prop:SMC}) If $\S\in\Dbfl(A\ten^\LL_RR/\m)$ is a simple-minded collection, then $\iota(\S)\in\Dbfl(A)$ is a simple-minded collection, where $\iota\colon\Dbfl(A\ten^\LL_RR/\m)\to\Dbfl(A)$ is the forgetful functor.
        \item [(3)] (=\cref{thm:bij-fib})
        The following diagram is commutative:
        \begin{center}
            \begin{tikzcd}
                \silt(A)\ar[rr,"(1)","-\ten^\LL_RR/\m"'] & & \silt(A\ten^\LL_RR/\m)\ar[dd,leftrightarrow,"\text{ \cref{main:ST}}"] \\
                & & \\
                \SMC(\Dbfl(A))\ar[uu,leftrightarrow,"\text{\cref{main:ST} }"] & & \SMC(\Dbfl(A\ten^\LL_RR/\m))\ar[ll,"(2)","\iota(-)"'].
            \end{tikzcd}
        \end{center}
        In particular, the map $-\ten^\LL_RR/\m\colon\silt(\per(A))\to\silt(\per(A\ten^\LL_RR/\m))$ is a bijection.
    \end{itemize}
\end{main}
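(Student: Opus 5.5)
We split the argument along the three parts of the statement. Throughout write $\bar A:=A\ten^\LL_RR/\m$ and $F:=-\ten^\LL_RR/\m\colon\per(A)\to\per(\bar A)$. Since $\Hom_{\bar A}(F X,Y)\cong\Hom_A(X,\iota Y)$ for $X\in\per(A)$ and $Y\in\D(\bar A)$, we have an adjunction $F\dashv\iota$, and we use it repeatedly.

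\textbf{Part (1).} Let $M$ be silting over $A$. Since $\bar A=F(A)$ and $A\in\thick(M)$, we get $\bar A\in\thick(FM)$, so $FM$ generates $\per(\bar A)$. For the vanishing $\Hom(FM,FM[i])=0$ for $i>0$, use adjunction: $\Hom_{\bar A}(FM,FM[i])\cong\Hom_A(M,M\ten^\LL_RR/\m[i])$. Now $R/\m$ lies in $\thick(R)$ only when $R$ is regular, so instead resolve $R/\m$ by its Koszul complex, or more generally by a free resolution. The object $M\ten^\LL_R R/\m$ is then a homotopy colimit, or an iterated extension, of copies of $M[j]$ with $j\ge 0$, since the resolution is concentrated in non-positive degrees. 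Because $M$ is compact, $\Hom(M,-)$ commutes with the relevant colimits, and $\Hom(M,M[i+j])=0$ for $i>0$, $j\ge0$. A Milnor-sequence argument controls the $\lim^1$-term: the degree-wise truncations stabilise because $M$ is perfect. Hence $FM$ is silting.

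\textbf{Part (2).} Let $\S$ be a simple-minded collection in $\Dbfl(\bar A)$. The Hom-vanishing in negative degrees is tested in $\D(A)$ by $\Hom_A(\iota X,\iota Y[<0])$. Here the key point is that $\iota$ is not fully faithful, so this does not follow formally. We would instead use the characterisation in \cref{main:ST}: the elements of $\S$ are the simple objects of the heart of the bounded $t$-structure on $\Dbfl(\bar A)$ induced by the silting object $\bar M$ corresponding to $\S$.

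The plan is to find a silting object over $A$ lifting $\bar M$ only up to this heart. More precisely, we show that the $t$-structure pulls back under $\iota$. For the generating condition, $\thick(\iota\S)\supseteq\iota\Dbfl(\bar A)$, which contains $\iota(k)$-type objects. Every $X\in\Dbfl(A)$ is a finite extension of objects killed by $\m$: filter $H^*(X)$ by powers of $\m$ and use the truncations. Such objects lie in the essential image of $\iota$ up to extensions, so $\thick(\iota\S)=\Dbfl(A)$.

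Negative-degree vanishing and the division-ring property of $\End(\iota S)$ are the main obstacle. I would attack them via part (1) and \cref{main:ST} over $A$: it suffices to show that $\iota\S$ equals the simple-minded collection attached to some silting $M$ over $A$. Here we use the description in \cref{main:ST} of the simples as the $\top$ of the indecomposable summands, dual with respect to $M$, namely $\Hom(M_i,S_j[n])=\delta_{ij}\delta_{n0}\,\cdot$ a division ring.

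\textbf{Part (3) and bijectivity.} Commutativity of the square, first on the image of $F$: for $M$ silting over $A$ with associated collection $\S_M$, the collection $\S_{FM}$ over $\bar A$ satisfies
\[\Hom_A(M_i,\iota S'_j[n])\cong\Hom_{\bar A}(FM_i,S'_j[n]),\]
which is the defining duality. Uniqueness of the collection dual to $M$ in \cref{main:ST} then gives $\iota\S_{FM}=\S_M$. This establishes commutativity and shows that $F$ is injective, since $\iota$ is injective on isoclasses of collections after composing with the bijections.

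For surjectivity, given $\bar M$ silting over $\bar A$, take $\S=\S_{\bar M}$. Part (2) gives that $\iota\S$ is a simple-minded collection, and \cref{main:ST} then yields $M$ with $\S_M=\iota\S$. By the displayed adjunction, $FM$ and $\bar M$ are both dual to $\S$, so $FM\cong\bar M$. The same adjunction duality closes the gap left in part (2) once the circularity is avoided: prove (2) directly by establishing negative-degree vanishing through the filtration argument together with the fact that the $\iota S$ are killed by $\m$.
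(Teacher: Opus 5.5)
Your part (1) is correct and is essentially the paper's argument. Your resolution-and-compactness computation amounts to $\RHom_A(M,\iota FM)\simeq\RHom_A(M,M)\ten^\LL_RR/\m$ together with right $t$-exactness of $-\ten^\LL_RR/\m$. No $\lim^1$ issue arises, because $\Hom(M,-)$ commutes with the sequential homotopy colimit. The generation half of (2) and the formal bookkeeping in (3) also match the paper.

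The genuine gap is the other half of (2), and that is the real content of the theorem. For $\S=\{L_1,\dots,L_n\}$ you never show that $\Hom_{\D(A)}(\iota L_i,\Sigma^{n}\iota L_j)=0$ for $n<0$. Nor do you show that $\Hom_{\D(A)}(\iota L_i,\iota L_j)$ is a division ring for $i=j$ and zero for $i\neq j$. Your main plan is to exhibit $\iota\S$ as the collection of a silting $M$ over $A$ lifting $\bar M$. That presupposes exactly the lifting you later deduce from (2), so it is circular. The closing appeal to ``the filtration argument together with the fact that the $\iota S$ are killed by $\m$'' is not an argument. Since $\iota$ creates new morphisms (for $A=R$ not a field, $\Hom_{\D(R)}(k,\Sigma k)\neq0=\Hom_{\D(k)}(k,\Sigma k)$), you have to show that the new contributions only land in degrees where the hypothesis kills them. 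Your uniqueness step in (3) also needs $\iota\S_{FM}$ to be a simple-minded collection, so it too rests on (2).

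The missing ingredient is the comparison lemma \cref{lem:posi}. Suppose $X\in\Dbfl(\bar A)$, $Y\in\D^+(\bar A)$ and $\RHom_{\bar A}(X,Y)\in\D(R/\m)^{\ge0}$. Then $\RHom_A(\iota X,\iota Y)\in\D(R)^{\ge0}$, and $\iota\colon\Hom_{\D(\bar A)}(X,Y)\to\Hom_{\D(A)}(\iota X,\iota Y)$ is bijective. Applied to $X=L_i$, $Y=L_j$, this yields all the remaining axioms.

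It is proved with your own adjunction, $\RHom_A(\iota X,\iota Y)\simeq\RHom_{\bar A}(\iota X\ten^\LL_RR/\m,Y)$, by analysing the counit $\iota X\ten^\LL_RR/\m\to X$. Take a semi-free model $RQ\simeq R/\m$ with arrows in negative degrees and the bimodule sequence $0\to RQ\ten_RRQ_1\ten_RRQ\to RQ\ten_RRQ\to RQ\to0$. From these, \cref{lem:ind} shows that the fibre $Z$ of the counit lies in $\Filt(\Sigma^{>0}X,\D(\bar A)^{\le-k})$ for arbitrarily large $k$. For $i\ge0$, $\Hom(\Sigma^{>0}X,\Sigma^{-i}Y)=0$ by hypothesis. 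Also $\Hom(\D(\bar A)^{\le-k},\Sigma^{-i}Y)=0$ for $k\gg0$, because $Y$ is bounded below. Hence $\Hom(Z,\Sigma^{-i}Y)=\Hom(\Sigma Z,\Sigma^{-i}Y)=0$, and the counit induces $\Hom_{\D(\bar A)}(X,\Sigma^{-i}Y)\cong\Hom_{\D(A)}(\iota X,\Sigma^{-i}\iota Y)$. Without this step, or an equivalent one, surjectivity of $-\ten^\LL_RR/\m$ is not established.
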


By \cref{main:bij-fib}, silting theory on $A$ is completely determined 
by that of its derived fiber $A\ten_R^{\LL} R/\m$. 
Therefore, for any morphism of complete local noetherian rings 
$(R,\m,k)\to(S,\n,k)$ with common residue field $k$, 
the silting theory of $A$ coincides with that of $A\ten_R^{\LL} S$.
\begin{main}\label{main:R-S}(=\cref{thm:R-S})
    The assignment $M\mapsto M\ten^\LL_RS$ induces a bijection $\silt(A)\simeq\silt(A\ten^\LL_RS)$.
\end{main}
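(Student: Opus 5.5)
The plan is to reduce \cref{main:R-S} to two applications of \cref{main:bij-fib}(3), once over $R$ and once over $S$, glued together by a comparison of derived fibers.

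First, observe that $B:=A\ten^\LL_RS$ is a non-positive locally finitely generated dg $S$-algebra. Non-positivity follows from the Künneth spectral sequence, or from a K-flat resolution of $S$ over $R$ concentrated in non-positive degrees. Local finite generation holds because each $H^i(B)$ is a finite $S$-module: $H^*(A)$ is finite in each degree, and $\Tor^R_j(H^i(A),S)$ is finitely generated over $S$ since $S$ is noetherian and $H^i(A)$ has a resolution by finite free $R$-modules.

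Hence \cref{main:bij-fib}(3) applies to both $A$ over $R$ and $B$ over $S$. This gives bijections
\[\silt(A)\xrightarrow{-\ten^\LL_Rk}\silt(A\ten^\LL_Rk),\qquad \silt(B)\xrightarrow{-\ten^\LL_Sk}\silt(B\ten^\LL_Sk).\]
Next, use associativity of derived tensor products together with the hypothesis that $R\to S\to k$ composes to $R\to k$, since the residue fields coincide. This yields a natural quasi-isomorphism of dg $k$-algebras
\[B\ten^\LL_Sk=(A\ten^\LL_RS)\ten^\LL_Sk\simeq A\ten^\LL_R k.\]
It is natural in modules as well: for $M\in\per(A)$ we get $(M\ten^\LL_RS)\ten^\LL_Sk\simeq M\ten^\LL_Rk$, compatibly with the identification of the algebras. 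The quasi-isomorphism of dg algebras induces an equivalence of derived categories, so it identifies their silting sets.

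It remains to check that $M\mapsto M\ten^\LL_RS$ sends silting complexes to silting complexes, i.e. defines a map $\silt(A)\to\silt(B)$. The proof of \cref{main:bij-fib}(1) (\cref{prop:silt}) should work verbatim with $R/\m$ replaced by $S$. The functor $-\ten^\LL_A B$ sends $A$ to $B$ and hence $\per(A)\to\per(B)$. It also preserves thick generation, because $\thick(M)\ni A$ implies $\thick(M\ten^\LL_RS)\ni B$. Vanishing of positive self-extensions follows from the identification
\[\Hom_{B}(M\ten^\LL_RS,M\ten^\LL_RS[i])\simeq H^i(\RHom_A(M,M)\ten^\LL_RS)\]
for perfect $M$. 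Since $\RHom_A(M,M)$ has cohomology vanishing in positive degrees and $S$ has Tor-amplitude in non-positive degrees, this group vanishes for $i>0$. Basicness and isomorphism classes are also respected: $M\ten^\LL_R S$ and $M$ have the same number of indecomposable summands, which can be detected after $-\ten^\LL k$ via the bijection.

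Then the triangle
\[\silt(A)\xrightarrow{-\ten^\LL_RS}\silt(B)\xrightarrow{-\ten^\LL_Sk}\silt(A\ten^\LL_Rk)\]
commutes up to the identification above. Its composite is the bijection from \cref{main:bij-fib}(3), and the second map is also a bijection, so $-\ten^\LL_RS$ is a bijection.

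The main obstacle I expect is the careful verification that $B$ satisfies the hypotheses. This includes finiteness of each $H^i(B)$, which needs a bounded-above argument, since infinitely many Tor terms could a priori contribute to one degree. It also includes making the identification of derived fibers an honest quasi-isomorphism of dg algebras, so that the silting sets are literally identified. For the finiteness, I would first truncate $A$ and use that each $H^i(B)$ depends only on $\tau_{\ge i-N}A$ modulo finitely many terms, with degreewise finiteness of $H^*(A)$.
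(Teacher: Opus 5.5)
Your proposal is correct and follows essentially the same route as the paper: apply \cref{thm:bij-fib} over $R$ and over $S$, identify $(A\ten^\LL_RS)\ten^\LL_SS/\n\simeq A\ten^\LL_RR/\m$, and use \cref{prop:silt} for the top map, so bijectivity of the composite and of the second map forces bijectivity of the first. Your extra check that $A\ten^\LL_RS$ is non-positive and locally finitely generated is something the paper only asserts in the introduction. The finiteness worry dissolves because $A$ is quasi-isomorphic, as an $R$-complex, to a bounded-above complex of finite free $R$-modules. Your remark about basicness is unnecessary, since $\silt$ consists of $\add$-equivalence classes and an additive functor respects these.
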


\medskip
\noindent
{\bf Organization.}

In \cref{section:Pre}, we collect and recall basic definitions and facts about triangulated categories and dg algebras. In \cref{section:ST}, we prove \cref{main:ST}. In \cref{section:bij}, we prove \cref{main:bij-fib} and \cref{main:R-S}. In \cref{section:relation}, we explore the relationship with previous results.

\noindent
{\bf Conventions and notation.}

Throughout this paper, we fix a commutative complete local noetherian ring $(R,\m,k)$. For a noetherian $R$-algebra $\Lambda$, we denote by $\fl \Lambda$ the subcategory of $\mod\Lambda$ consisting of $\Lambda$-modules which have finite length as $R$-modules.
Throughout this paper, we assume that \textbf{all subcategories are full, closed under taking (finite) direct sums and direct summands}.

Let $\T$ be a triangulated category and $\U\subseteq \T$ a subset. We define subcategories of $\T$ as follows:
\begin{itemize}
    \item $\add U$: the minimal subcategory of $\T$ containing $U$,
    \item $\thick U$: the minimal subcategory of $\T$ containing $U$ and closed under taking shifts and cones,
    \item $\Filt U$: the minimal subcategory of $\T$ containing $U$ and closed under taking extensions.
\end{itemize}
For another subset $\V\subseteq\T$, we put
\begin{align*}
    \U\ast\V:=\{X\in\T\mid\text{there is an exact triangle }U\to X\to V\to\text{ such that }U\in\U \text{ and }V\in\V\}.
\end{align*}

\noindent
{\bf Acknowledgements.}
The author would like to express his sincere gratitude to his supervisor Akira Ishii for his continuous support and encouragement.
The author is grateful to Ryu Tomonaga for carefully reading an earlier draft of this paper and for his many valuable comments and suggestions.

%%%%%%%%%%%%%%%%%%%%%%%%%%%%%%%%%%%%%%%%%%%%%%%%%%%%%%%%%%%%%%%%%%%%%%%%
\section{Preliminaries}\label{section:Pre}

%%%%%%%%%%%%%%%%%%%%%%%%%%%%%%%%%%%%%%%%%%%%%%%%%%%%%%%%%%%%%%%%%%%%%%%%%%%%%%%%%%%%%%%%%
\subsection{Triangulated categories}

In this subsection, we collect and recall some definitions and facts about triangulated categories.

\begin{dfn}\cite{KV88,AI12}
    An object $M\in\T$ is called a \emph{silting object} if
    \begin{itemize}
        \item $\Hom_\T(M,\Sigma^{>0}M)=0$,
        \item $\thick M=\T$.
    \end{itemize}
    Two silting objects $M$ and $N$ are said to be \emph{equivalent} if $\add M=\add N$. We denote by $\silt(\T)$ the equivalence classes of silting objects in $\T$.
\end{dfn}

\begin{dfn}\label{def:SMC}\cite{Al09}
    A set $\L=\{L_1,\ldots,L_n\}$ of objects of $\T$ is called a \emph{simple-minded collection} if 
    \begin{itemize}
        \item $\Hom_\T(\L,\Sigma^{<0}\L)=0$,
        \item $\L$ is a semibrick, that is, for every $i,j$,
        \begin{align*}
            \Hom_\T(L_i,L_j)=\begin{cases} \text{division ring} & \text{if } i=j,\\
            0 & \text{if } i\neq j,\end{cases}
        \end{align*}
        \item $\thick\L=\T$.
    \end{itemize}
    We denote by $\SMC(\T)$ the isomorphism classes of simple-minded collections in $\T$.
\end{dfn}

\begin{rmk}\label{rmk:silt}\label{rmk:SMC}
    The following facts will be used in the proof of \cref{prop:silt-SMC}.
    \begin{itemize}
        \item  \cite[Proposition 2.23]{AI12} Any silting object $M$ induces a bounded co-$t$-structure with a coheart $\add M$. Thus, if two silting objects $M$ and $N$ satisfy $N\in\add M$, then we have $\add M=\add N$.
        \item \cite[Remarque 1.3.14]{BBD81} Any simple-minded collection $\L$ induces a bounded $t$-structure with a heart $\Filt\L$. Thus, if two simple-minded collections $\L$ and $\S$ satisfy $\S\subseteq\Filt\L$, then we have $\L\simeq\S$. 
    \end{itemize}
\end{rmk}

%%%%%%%%%%%%%%%%%%%%%%%%%%%%%%%%%%%%%%%%%%%%%%%%%%%%%%
\subsection{Dg algebras}

For the definition of dg algebras and their derived categories, we refer the reader to \cite{Ke}.
\begin{dfn}
    Let $A$ be a dg $R$-algebra.
    \begin{itemize}
        \item $\per(A):=\thick_{\D(A)}A_A$ is called the \emph{perfect derived category} of $A$.
        \item $\Dbfl(A):=\{X\in\D(A)\mid H^*(X)\in\fl R\}$ is called the \emph{finite-length derived category} of $A$.
    \end{itemize}
\end{dfn}

A silting object in $\per(A)$ is also called a silting complex over $A$.
Although $\silt(\per(A))$ is often denoted by $\silt(A)$, we use the notation $\silt(\per(A))$ in this paper, since we also deal with silting objects in the finite-length derived categories. 

\begin{rmk}
    The perfect derived category $\per(A)$ coincides with the subcategory of $\D(A)$ consisting of compact objects \cite{Ke}. Thus, derived equivalence $\D(A)\simeq\D(B)$ induces an equivalence $\per(A)\simeq\per(B)$. Since we have
    \begin{align*}
    \Dbfl(A)=\{X\in\D(A)\mid\RHom_A(M,X)\in\Dbfl(R)\text{ for every }M\in\per(A)\},
    \end{align*}
    one also has an equivalence $\Dbfl(A)\simeq\Dbfl(B)$.
\end{rmk}

\begin{dfn}\label{dfn:dg}
    Let $A$ be a dg $R$-algebra. Then $A$ is said to be
    \begin{itemize}
        \item \emph{locally finitely generated} if $H^i(A)\in\mod R$ for every $i\in\ZZ$,
        \item \emph{locally finite-length} if $H^i(A)\in\fl R$ for every $i\in\ZZ$,
        \item \emph{non-positive} if $H^{>0}(A)=0$,
        \item \emph{positive} if $H^{<0}(A)=0$ and $H^0(A)$ is semisimple.
    \end{itemize}
\end{dfn}

\begin{nota}
    For a non-positive dg algebra $A$ and a subset $I\subset\ZZ$, we put
    \begin{align*}
        \D^I(A):=\{X\in\D(A)\mid H^j(X)=0\text{ if }j\notin I\}.
    \end{align*}
    We denote by $\sigma^{\le k}$ and $\sigma^{\ge k}$ the truncation functors with respect to the $t$-structure $(\D(A)^{\le k},\D(A)^{\ge k})$.
\end{nota}

If $A$ is a non-positive dg $R$-algebra, then $\Dbfl(A)^0=\Dbfl(A)\cap\D(A)^0$ is naturally identified with $\fl H^0(A)$. If moreover $H^0(A)\in\mod R$, then the set of (complete representatives of isomorphism classes of) simple objects $\simp(\Dbfl(A)^0)$ is a simple-minded collection in $\Dbfl(A)$.

\begin{nota}
    We put $\S_A:=\simp(\Dbfl(A)^0)$.
\end{nota}

For a positive dg algebra $B$, Keller and Nicolas showed that the subcategories defined by the vanishing of cohomology induce a co-$t$-structure.

\begin{thm}\cite[Corollary 5.1]{KN}
    Let $B$ be a positive dg $R$-algebra. Then $(\D(B)_{\ge k},\D(B)_{\le k})$ is a co-$t$-structure on $\D(B)$, where
    \begin{align*}
        \D(B)_{\ge k}&:=\{X\in\D(B)\mid H^{<k}(X)=0\}, \\
        \D(B)_{\le k}&:=\{X\in\D(B)\mid H^{>k}(X)=0\}.
    \end{align*} In addition, for every $X\in\D(B)$, there exists an exact triangle
    \begin{align*}
    \sigma_{>k}(X)\to X\to\sigma_{\le k}(X)\to
    \end{align*}
    such that $\sigma_{>k}(X)\in\D(B)_{>k}$ and $H^i(\sigma_{>k}(X))\simeq H^i(X)$ hold for every $i>k$. Moreover, the above exact triangle is uniquely determined up to isomorphism.
\end{thm}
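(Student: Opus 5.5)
The plan is to verify the co-$t$-structure axioms directly: build the truncation triangle from semifree cells and get orthogonality from a cellular induction. Recall that $H^i(\Sigma X)=H^{i+1}(X)$, so $\Sigma\D(B)_{\ge k}=\D(B)_{\ge k-1}$ and $\Sigma\D(B)_{\le k}=\D(B)_{\le k-1}$. Closure under summands and the shift conditions are then immediate, and three things remain.
\begin{itemize}
\item[(a)] Orthogonality: $\Hom_{\D(B)}(X,Y)=0$ whenever $X\in\D(B)_{>k}$ and $Y\in\D(B)_{\le k}$.
\item[(b)] Existence of the triangle $\sigma_{>k}(X)\to X\to\sigma_{\le k}(X)\to$ for every $X$, with $H^i(\sigma_{>k}X)\simeq H^i(X)$ for $i>k$.
\item[(c)] Uniqueness of this triangle.
\end{itemize}
Part (c) is formal from (a) and (b) by the usual argument for torsion pairs.

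The basic building blocks are the modules $eB[-j]$, where $e$ runs over primitive idempotents of the semisimple ring $H^0(B)$. First replace $B$ by a quasi-isomorphic model in which these idempotents lift; one way is to take $\sigma^{\ge 0}$-type truncations and lift idempotents along $Z^0(B)\to H^0(B)$. Positivity gives two facts about these blocks:
\begin{itemize}
\item $H^*(eB[-j])$ is concentrated in degrees $\ge j$, and $H^j(eB[-j])=eH^0(B)$ is a simple $H^0(B)$-module;
\item $\Hom_{\D(B)}(eB[-j],Y)=H^j(Y)e$.
\end{itemize}
For (a), I will show that every $X\in\D(B)_{>k}$ is a homotopy colimit of a sequence $P_0\to P_1\to\cdots$. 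Here each $P_n$ is obtained from $P_{n-1}$ by attaching cones of maps from sums of cells $eB[-j]$ with $j>k$. Then $\Hom(\text{cell},Y)=H^j(Y)e=0$ for $Y\in\D(B)_{\le k}$. Hence $\Hom(P_n,\Sigma^{\le 0}Y)=0$ by induction on $n$, and the Milnor sequence gives $\Hom(X,Y)=0$, since all $\lim$ and $\lim^1$ terms vanish.

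Both (a) and (b) rest on the following cellular construction, which I expect to be the main obstacle. Fix $X$ and construct $P\to X$ degree by degree, for $j=k+1,k+2,\dots$.
\begin{itemize}
\item In step $j$, $H^j(X)$ is a module over the semisimple ring $H^0(B)$, so it is a direct sum of simples $eH^0(B)$. Attach one cell $eB[-j]$ for each simple summand, mapping onto it. This makes $H^j$ surjective with no redundant generators.
\item Next, kill the kernel of $H^{j+1}(P)\to H^{j+1}(X)$, which includes the junk contributed by the earlier cells. Do this by coning off maps $eB[-(j+1)]\to P$ that pick out simple summands of that kernel, again decomposed using semisimplicity. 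This attaches cells $eB[-j]$ of degree $j>k$.
\end{itemize}
The crucial check is that the second kind of attachment does not disturb $H^j$. In the long exact sequence
\[
H^j(P)\to H^j(\mathrm{cone})\to H^{j+1}(eB[-(j+1)])=eH^0(B)\to H^{j+1}(P),
\]
the last map is injective, because it is nonzero on a simple module. So $H^j$ is unchanged, and cohomology in degrees $<j$ is untouched by positivity.

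Because each cell only affects cohomology in degrees $\ge$ its own degree, the construction stabilises degreewise. The colimit $P=:\sigma_{>k}(X)$ therefore lies in $\D(B)_{>k}$, and $H^i(P)\to H^i(X)$ is an isomorphism for all $i>k$. Setting $\sigma_{\le k}(X):=\mathrm{cone}(P\to X)$, the long exact sequence shows $H^{>k}(\sigma_{\le k}X)=0$. The only delicate point is the boundary degree $k+1$, where we need injectivity of $H^{k+1}(P)\to H^{k+1}(X)$; this is exactly what the minimal choice of cells, made possible by the semisimplicity of $H^0(B)$, guarantees. Applying the same construction to $X\in\D(B)_{>k}$ yields the cellular presentation used in (a), completing the proof.
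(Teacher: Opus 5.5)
The paper gives no proof of this statement; it is quoted from \cite{KN}. Your overall strategy is in the spirit of the Dwyer--Greenlees--Iyengar-type argument used there: approximate $X$ by cells $eB[-j]$ with $j>k$, and let semisimplicity of $H^0(B)$ protect the bottom degree $k+1$. \textbf{However, the construction in (b) fails as written.} When you cone off $C=\bigoplus_\beta e_\beta B[-(j+1)]\to P$ to kill $K=\ker\bigl(H^{j+1}(P)\to H^{j+1}(X)\bigr)$, you check that $H^{\le j}$ is unchanged, but not degree $j+1$ itself. The long exact sequence gives
\[
0\to H^{j+1}(P)/K\to H^{j+1}(P')\to\ker\bigl(H^{j+2}(C)\to H^{j+2}(P)\bigr)\to0,\qquad H^{j+2}(C)=\textstyle\bigoplus_\beta e_\beta H^1(B),
\]
so the cells you just attached create new classes in degree $j+1$, and these may map to zero in $H^{j+1}(X)$. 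Your schedule never returns to the kernel in degree $j+1$: step $j+1$ only adds cells for surjectivity and kills the kernel in degree $j+2$. So the construction does stabilise degreewise, but at the wrong value.

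Here is a concrete failure. Let $R$ be a field, $B=R[x]/(x^2)$ with $|x|=1$ and $d=0$, $X=B$ and $k=0$. You attach $B[-1]\xrightarrow{x}B$, then cone off $B[-2]\xrightarrow{x}B[-1]$. The cone $P'$ has $H^1(P')\cong H^1(B)$, but $H^2(P')\cong H^3(B[-2])\cong R$ while $H^2(B)=0$. Nothing afterwards touches degree $2$, so $\mathrm{cone}(P'\to B)$ has $H^1\neq0$. In fact $\sigma_{>0}(B)\simeq R[-1]$ is the shifted simple module. It is not perfect and needs infinitely many degree-$1$ cells attached one after another, so the kernel-killing in a fixed degree must be repeated infinitely often.

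The missing ingredient is this infinite iteration, with injectivity obtained in the colimit rather than by degreewise stabilisation. For instance, build a single sequence $P_0\to P_1\to\cdots$ over $X$:
\begin{itemize}
\item $P_0$ is a sum of cells whose degree-$(k+1)$ cells map isomorphically onto $H^{k+1}(X)$ (via a simple decomposition), and whose higher cells hit generators of each $H^m(X)$ with $m\ge k+2$.
\item $P_{n+1}$ is obtained by coning off, for all $m\ge k+2$ simultaneously, cells $eB[-m]$ whose bottom classes generate $\ker\bigl(H^m(P_n)\to H^m(X)\bigr)$, using a simple decomposition when $m=k+2$.
\end{itemize}
Your injectivity argument shows $H^{k+1}$ never changes. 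Surjectivity in degrees $>k$ holds from $P_0$ on, and every kernel class at stage $n$ dies at stage $n+1$. Hence $\hocolim P_n\to X$ is an isomorphism on $H^{>k}$.

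Three smaller points also need repair.
\begin{itemize}
\item In (a), the claim $\Hom(P_n,\Sigma^{\le0}Y)=0$ is false; for example $\Hom(eB[-(k+1)],\Sigma^{-1}Y)=H^k(Y)e$. The term $\lim^1\Hom(\Sigma P_n,Y)$ vanishes for a different reason: the tower $\Hom(P_n,\Sigma^{-1}Y)$ is Mittag-Leffler. The cofibre $Q_{n+1}$ of $P_n\to P_{n+1}$ is a sum of cells of degree $>k$, so $\Hom(\Sigma^{-1}Q_{n+1},\Sigma^{-1}Y)=\Hom(Q_{n+1},Y)=0$.
\item In (c), the usual torsion-pair argument does not apply, since $\D(B)_{\le k}$ is not closed under $\Sigma^{-1}$ and co-$t$-structure decompositions are not unique in general. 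Uniqueness comes from factoring $U\to X$ through $U'\to X$ via (a). The factor $U\to U'$ is an isomorphism on $H^{>k}$ between objects of $\D(B)_{>k}$, hence an isomorphism.
\item The model replacement is unnecessary, and the subcomplex $\cdots\to B^{-1}\to d(B^{-1})\to0$ need not be an ideal. Idempotents of $H^0(B)=\Hom_{\D(B)}(B,B)$ already split in $\D(B)$.
\end{itemize}
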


The following proposition allows us to construct a silting object from a simple-minded collection.

\begin{prop}\cite[Proposition 5.6, Lemma 6.2]{KN}\label{prop:proj}
    Let $B$ be a positive dg $R$-algebra such that $H^0(B)\in\fl R$. Then $\sigma_{\le0}(B)$ is a silting object in $\Dbfl(B)$. 
\end{prop}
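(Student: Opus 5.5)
The statement says that for a positive dg $R$-algebra $B$ with $H^0(B)\in\fl R$, the object $P:=\sigma_{\le0}(B)$ is silting in $\Dbfl(B)$. I would verify three things in turn: $P$ lies in $\Dbfl(B)$, $P$ has no positive self-extensions, and $P$ generates $\Dbfl(B)$ as a thick subcategory. Throughout I use the co-$t$-structure $(\D(B)_{\ge k},\D(B)_{\le k})$ of \cite[Corollary 5.1]{KN} and the triangle
\begin{align*}
\sigma_{>0}(B)\to B\to \sigma_{\le 0}(B)\to .
\end{align*}

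\textbf{Membership.} The long exact sequence of this triangle, together with $H^i(\sigma_{>0}B)\simeq H^i(B)$ for $i>0$ and $H^{\le 0}(\sigma_{>0}B)=0$, gives $H^{i}(P)=0$ for $i\ne 0$ and $H^0(P)\simeq H^0(B)$. Since $H^0(B)\in\fl R$ by hypothesis, $P\in\Dbfl(B)$.

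\textbf{Vanishing.} I need $\Hom(P,\Sigma^{m}P)=0$ for $m>0$. Applying $\Hom(-,\Sigma^mP)$ to the triangle gives an exact sequence
\begin{align*}
\Hom(\Sigma\sigma_{>0}B,\Sigma^mP)\to\Hom(P,\Sigma^mP)\to\Hom(B,\Sigma^mP).
\end{align*}
The right-hand term is $H^m(P)=0$. For the left-hand term, $\Sigma\sigma_{>0}B\in\D(B)_{\ge 0}$, while $\Sigma^mP$ is concentrated in degree $-m<0$, so $\Sigma^mP\in\D(B)_{\le -m}\subseteq\D(B)_{\le -1}$. Co-$t$-structure orthogonality, $\Hom(\D_{\ge0},\D_{\le -1})=0$ in the appropriate indexing convention, then kills this term. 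I will need to check the indexing convention of \cite{KN} carefully here; if the orthogonality is stated as $\Hom(\D_{>k},\D_{\le k})=0$, the shifts still give vanishing.

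\textbf{Generation.} This is the main obstacle. I would first show that every $X\in\Dbfl(B)$ lies in the thick closure of the simple $H^0(B)$-modules placed in single degrees. Positivity gives a standard $t$-structure whose heart $\D(B)^0$ is identified with $\mod H^0(B)$; this uses $H^{<0}(B)=0$ and the semisimplicity of $H^0(B)$. Truncating then expresses $X$ as an iterated extension of its cohomology, and each finite-length cohomology module is a finite extension of simples, so $X\in\thick\{\text{simples}\}$. Next, since $H^0(B)$ is semisimple, each simple is a direct summand of $H^0(B)=H^0(P)$, and $P$ itself has cohomology only in degree $0$, so $P$ is the degree-$0$ object with cohomology $H^0(B)$. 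Every simple is therefore a summand of $P$, which gives $\thick P=\Dbfl(B)$. The delicate point is identifying objects with cohomology concentrated in degree $0$ with modules over $H^0(B)$ even though $B$ is positive, not non-positive. I expect to rely on Keller–Nicolas's analysis of positive dg algebras, namely their \cite[Lemma 6.2]{KN}, which says $\D(B)^0$ is equivalent to $\mod H^0(B)$ with $\sigma_{\le0}B$ as the corresponding object. If that fails, I would argue directly via the co-$t$-structure truncations $\sigma_{>k},\sigma_{\le k}$ applied to $X$.
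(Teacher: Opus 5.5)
The paper gives no proof of its own here; it cites \cite[Proposition 5.6, Lemma 6.2]{KN}. Your membership and vanishing steps are correct. The vanishing step is exactly the co-$t$-structure orthogonality $\Hom(\D(B)_{>k},\D(B)_{\le k})=0$ with $k=-1$.

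The gap is in the generation step, whose main line rests on a false claim. For a positive dg algebra, vanishing of cohomology does not give a $t$-structure on $\D(B)$. Nor is the full subcategory of objects with cohomology in degree $0$ equivalent to $\mod H^0(B)$. Take $R=k$ and $B=k[\epsilon]/(\epsilon^2)$ with $d=0$.
\begin{itemize}
\item If $|\epsilon|=1$, then $\Hom_{\D(B)}(\sigma_{\le0}B,\sigma_{\le0}B)=\Hom_{\D(B)}(k,k)\cong k[\![x]\!]$, not $H^0(B)=k$.
\item If $|\epsilon|=2$, then $\Hom_{\D(B)}(k,\Sigma^{-1}k)\cong k\neq0$, although $H^{>0}(k)=0$ and $H^{\le0}(\Sigma^{-1}k)=0$. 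So the $t$-structure orthogonality fails; this is why the positive side carries a co-$t$-structure instead.
\end{itemize}
You therefore cannot import this identification from \cite{KN}. As a consequence, two of your claims are unjustified. First, ``truncating expresses $X$ as an iterated extension of its cohomology'' presupposes $t$-truncations that do not exist. Second, ``each simple is a summand of $H^0(P)$, hence of $P$'' needs two things you have not shown. Idempotents of $H^0(B)$ must lift along the surjection $\Hom_{\D(B)}(P,P)\to H^0(B)$, whose kernel can be large. And an object with cohomology in degree $0$ must be determined by that cohomology.

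Your fallback is the right repair, and it needs only the co-$t$-structure you already use.

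\textbf{(a) Reduction to one degree.} For $X\in\Dbfl(B)$, take the triangle $\sigma_{>k}X\to X\to\sigma_{\le k}X\to$. Its long exact sequence, together with $H^i(\sigma_{>k}X)\cong H^i(X)$ for $i>k$, gives $H^i(\sigma_{\le k}X)\cong H^i(X)$ for $i\le k$ and $H^i(\sigma_{\le k}X)=0$ for $i>k$. So these triangles replace the missing $t$-truncations. Induction on the number of nonzero cohomology groups reduces you to $X=\Sigma^{-j}Y$ with $H^*(Y)=H^0(Y)\in\fl R$.

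\textbf{(b) Such $Y$ lie in $\add P$.}
\begin{itemize}
\item Choose $B^{\oplus n}\to Y$ surjective on $H^0$. This is possible because $\Hom_{\D(B)}(B,Y)=H^0(Y)$ has finite length over $R$, hence is finitely generated over $H^0(B)$.
\item Since $\Hom(\D(B)_{>0},\D(B)_{\le0})=0$, this map factors through $B^{\oplus n}\to P^{\oplus n}$. This gives $g\colon P^{\oplus n}\to Y$, still surjective on $H^0$.
\item The cone of $g$ has cohomology only in degree $-1$. So there is a triangle $K\to P^{\oplus n}\to Y\xrightarrow{h}\Sigma K$ with $H^*(K)$ concentrated in degree $0$.
\item Now $Y\in\D(B)_{\ge0}$ and $\Sigma K\in\D(B)_{\le-1}$, so $h=0$ by the same orthogonality as in your vanishing step. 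The triangle splits and $Y\in\add P$.
\end{itemize}

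Hence $\Dbfl(B)\subseteq\thick P$. The reverse inclusion holds because $P\in\Dbfl(B)$ and $\Dbfl(B)$ is thick. This argument avoids idempotent lifting entirely.
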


The following well-known result will be used repeatedly throughout this paper.

\begin{prop}\label{prop:silt-SMC}
    \
    \begin{itemize}
        \item [(1)] Let $A$ be a locally finitely generated non-positive dg $R$-algebra. Then there is a map
        \begin{align*}
            \phi\colon\silt(\per(A))\to\SMC(\Dbfl(A))
        \end{align*}
        characterized by $\RHom_A(M,\phi(M))\subseteq\D(R)^0$.
        \item [(2)] Let $B$ be a locally finite-length positive dg $R$-algebra. Then there is a map
        \begin{align*}
            \psi\colon\SMC(\per(B))\to\silt(\Dbfl(B))
        \end{align*}
        characterized by $\RHom_B(\L,\psi(\L))\subseteq\D(R)^0$.
    \end{itemize}
\end{prop}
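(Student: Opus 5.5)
The plan for (1) is to pass to the derived endomorphism algebra and use the standard heart there. Given a silting object $M\in\per(A)$, set $\Gamma:=\REnd_A(M)$. This is a dg $R$-algebra.

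First I check that $\Gamma$ satisfies the same hypotheses as $A$. Its cohomology is $H^i(\Gamma)=\Hom(M,\Sigma^iM)$, which vanishes for $i>0$ because $M$ is silting. Since $M$ is perfect and $A$ is locally finitely generated, each $H^i(\Gamma)$ lies in $\mod R$; this is a d\'evissage from $\Hom(A,\Sigma^iA)=H^i(A)$. So $\Gamma$ is a non-positive locally finitely generated dg $R$-algebra.

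Next I transport the problem to $\Gamma$. Because $\thick M=\per(A)$, the functor $\RHom_A(M,-)$ gives an equivalence $\D(A)\simeq\D(\Gamma)$ sending $M$ to $\Gamma$. By the Remark above, it restricts to equivalences $\per(A)\simeq\per(\Gamma)$ and $\Dbfl(A)\simeq\Dbfl(\Gamma)$. I then define $\phi(M)$ to be the preimage of $\S_\Gamma$, the simples of $\fl H^0(\Gamma)$. By the comment preceding the Notation, $\S_\Gamma$ is a simple-minded collection; this step uses that $H^0(\Gamma)$ is a noetherian $R$-algebra over complete $R$, so $\fl H^0(\Gamma)$ has finitely many simples and $\thick\S_\Gamma=\Dbfl(\Gamma)$ by truncation. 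The defining property then holds: $\RHom_A(M,\phi(M))\cong\S_\Gamma$ as complexes of $R$-modules, and these are concentrated in degree $0$.

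For the "characterized by" part I show uniqueness. Suppose $\L\in\SMC(\Dbfl(A))$ satisfies $\RHom_A(M,\L)\subseteq\D(R)^0$. Under the equivalence, each $L\in\L$ becomes an object of $\Dbfl(\Gamma)\cap\D(\Gamma)^0=\fl H^0(\Gamma)=\Filt\S_\Gamma$. So $\L\subseteq\Filt\phi(M)$, and \cref{rmk:SMC} gives $\L\simeq\phi(M)$. Well-definedness on equivalence classes also needs checking. If $\add M=\add N$, the condition $\RHom(M,-)\in\D^0$ is equivalent to $\RHom(N,-)\in\D^0$, so the uniqueness argument identifies $\phi(M)$ and $\phi(N)$. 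I also have to account for basic versus non-basic representatives, but $\S_\Gamma$ is insensitive to multiplicities.

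Part (2) is dual, with the roles of the two categories swapped. Given $\L\in\SMC(\per(B))$, set $C:=\REnd_B(\bigoplus\L)$. Here $H^{<0}(C)=0$ by the first axiom of a simple-minded collection. $H^0(C)$ is a product of division rings, hence semisimple. Each $H^i(C)$ has finite length because $B$ is locally finite-length and $\L\subseteq\per(B)$. So $C$ is locally finite-length positive. Again $\thick\L=\per(B)$ gives $\D(B)\simeq\D(C)$, restricting to $\per$ and $\Dbfl$. I define $\psi(\L)$ as the preimage of $\sigma_{\le0}(C)$, which is silting in $\Dbfl(C)$ by \cref{prop:proj}.

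To verify the characterization in (2), I need $\RHom_C(C,\sigma_{\le0}C)=\sigma_{\le0}C\in\D^0$. The co-$t$-structure triangle gives $H^{>0}(\sigma_{\le0}C)=0$ and $H^{<0}=0$, the latter because $\sigma_{\le0}C\in\D(C)_{\ge0}$, as that aisle is closed under cones appropriately. For uniqueness, suppose $N$ is silting in $\Dbfl(C)$ with $\RHom(C,N)\in\D^0$. Then $N\in\D(C)_{\ge0}\cap\D(C)_{\le0}$, which is the coheart of the Keller–Nicolas co-$t$-structure restricted to $\Dbfl$. I expect to show this coheart is $\add\sigma_{\le0}C$, and then \cref{rmk:silt} gives $\add N=\add\sigma_{\le0}C$.

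I expect this last identification of the coheart with $\add\sigma_{\le0}(C)$ to be the main obstacle. My first attempt would be to use that objects $X$ of the coheart satisfy $\Hom(X,\Sigma^{>0}\sigma_{\le0}C)=0$ and have $\Hom(C,X)=H^0X$ semisimple, and then argue via \cite[Lemma 6.2]{KN} that $X$ splits into copies of $\sigma_{\le0}$ of the indecomposable summands of $C$.
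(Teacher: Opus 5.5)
\textbf{Verdict.} Your approach is the paper's, but your proof of (2) is incomplete at one step.

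\textbf{Part (1).} This is exactly the paper's proof: transport along $\RHom_A(M,-)\colon\D(A)\simeq\D(\REnd_A(M))$, take $\S_\Gamma$, and get uniqueness from $\L\subseteq\Filt\S_\Gamma$ together with \cref{rmk:SMC}. You also supply checks the paper leaves implicit, namely that $\Gamma$ is locally finitely generated and that $\phi(M)$ does not depend on the representative of $\add M$.

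\textbf{Part (2) and the gap.} This is also the route the paper means by ``similar'', but as written it is not yet a proof. The uniqueness half of ``characterized by'' needs: every $N\in\Dbfl(C)$ with $H^{\neq0}(N)=0$ lies in $\add\sigma_{\le0}(C)$. You leave this open; the paper uses it tacitly. The vanishing you propose, $\Hom(X,\Sigma^{>0}\sigma_{\le0}C)=0$, is not what does the work. What you need is the full orthogonality of the Keller--Nicolas co-$t$-structure. In the paper's indexing it reads $\Hom(\D(C)_{\ge0},\D(C)_{\le-1})=0$, or equivalently $\Hom(\D(C)_{>0},\D(C)_{\le0})=0$.

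\textbf{How to close it.} Let $X\in\Dbfl(C)$ have cohomology only in degree $0$.
\begin{enumerate}
\item Since $H^0(X)$ is finitely generated over the semisimple ring $H^0(C)$, there is an epimorphism $p\colon H^0(C)^n\to H^0(X)$. It is $H^0$ of a morphism $C^n\to X$, because $\Hom(C,X)=H^0(X)$.
\item This morphism factors through $C^n\to\sigma_{\le0}(C)^n$, since $\sigma_{>0}(C)\in\D(C)_{>0}$ and $X\in\D(C)_{\le0}$ give $\Hom(\sigma_{>0}(C),X)=0$.
\item The resulting $f\colon\sigma_{\le0}(C)^n\to X$ is still surjective on $H^0$. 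In the triangle $K\to\sigma_{\le0}(C)^n\xrightarrow{f}X\to\Sigma K$, the long exact sequence and $H^{>0}(\sigma_{\le0}C)=0$ give $H^{>0}(K)=0$. Hence $\Sigma K\in\D(C)_{\le-1}$.
\item Since $X\in\D(C)_{\ge0}$, orthogonality gives $\Hom(X,\Sigma K)=0$. So the triangle splits, $\sigma_{\le0}(C)^n\simeq K\oplus X$, and $X\in\add\sigma_{\le0}(C)$.
\end{enumerate}
Applying this to $N$ and then \cref{rmk:silt} gives $\add N=\add\sigma_{\le0}(C)$, as required.

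Your own sketch can also be completed. Decompose $C\simeq\bigoplus P_i$ in $\D(C)$ along primitive idempotents of $H^0(C)$. Choose a map $\bigoplus\sigma_{\le0}(P_i)^{m_i}\to X$ that is an isomorphism on $H^0$; it factors as in step 2. It is then a quasi-isomorphism, since both sides are concentrated in degree $0$. This variant additionally needs $H^0(\sigma_{\le0}P_i)\cong H^0(P_i)$, which comes from the cohomology-preservation clause of the Keller--Nicolas truncation triangle.
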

\begin{proof}
    $(1)$: Let $M$ be a silting object in $\per(A)$. \cite[Lemma 6.1]{Ke} yields an equivalence $\Phi_M\colon\D(A)\simeq\D(\REnd_A(M))$ which sends $M$ to $\REnd_A(M)$. Since $M$ is a silting object, $\REnd_A(M)$ is non-positive. This equivalence restricts to an equivalence $\Dbfl(A)\simeq\Dbfl(\REnd_A(M))$. We get a simple-minded collection $\S_{\REnd_A(M)}$ in $\Dbfl(\REnd_A(M))$, which satisfies $\RHom_{\REnd_A(M)}(\Phi_M(M),\S_{\REnd_A(M)})\subseteq\D(R)^0$. If a simple-minded collection $\L\subseteq\Dbfl(\REnd_A(M))$ satisfies $\RHom_{\REnd_A(M)}(\Phi(M),\L)\subseteq\D(R)^0$, then $\L$ is contained in the standard heart $\Filt\S_{\REnd_A(M)}$, and so $\L\simeq\S_{\REnd_A(M)}$ by \cref{rmk:SMC}.
    Therefore, the simple-minded collection $\phi(M):=\Phi_M^{-1}(\S_{\REnd_A(M)})\in\SMC(\Dbfl(A))$ is characterized by $\RHom_A(M,\phi(M))\in\D(R)^0$.

    $(2)$: The proof is similar to that of $(1)$.
\end{proof}

%%%%%%%%%%%%%%%%%%%%%%%%%%%%%%%%%%%%%%%%%%%%%%%%%%%%%%%%%%%%%%%%%%%%%%%%
\section{Silting-Simple-minded collection correspondence}\label{section:ST}

In this section, we fix a locally finitely generated non-positive dg $R$-algebra $A$.
The aim of this section is to show that there is a bijection between silting objects in $\per(A)$ and simple-minded collections in $\Dbfl(A)$ (=\cref{thm:ST}). To prove this theorem, we compare the derived category of $A$ with that of its Koszul dual.

\begin{nota}
    We denote by $S_A$ the $A$-module corresponding to $\top H^0(A)\in\fl H^0(A)$ under the identification $\Dbfl(A)^0\simeq\fl H^0(A)$.
\end{nota}
We have $\add S_A=\add\S_A$.

\begin{nota}
    We put
    \begin{itemize}
    \item $B:=\REnd_A(S_A)$,
    \item $\Phi:=\RHom_A(-,S_A)\colon\D(A)\to\D(B^\op)^\op$.
\end{itemize}
\end{nota}

\begin{lem}
    $B$ is a locally finite-length positive dg $R$-algebra, and
    we have $\Phi(S_A)={}_BB$ and $\Phi(A)\simeq \sigma_{\le0}(B^\op)$.
\end{lem}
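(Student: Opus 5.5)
The lemma has three assertions: $B$ is locally finite-length and positive, $\Phi(S_A)={}_BB$, and $\Phi(A)\simeq\sigma_{\le0}(B^\op)$. I will take them in turn, using that $\Phi=\RHom_A(-,S_A)$ is a contravariant functor to left $B$-modules, i.e.\ to $\D(B^\op)^\op$.

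\textbf{The cohomology of $B$.} We have $H^i(B)=\Hom_{\D(A)}(S_A,\Sigma^iS_A)$. For $i<0$ this vanishes: $S_A$ lies in the heart $\D(A)^0$ of the standard $t$-structure, and $\Hom$ from the heart to $\Sigma^{<0}$ of the heart is zero. For $i=0$, $\Hom_{\D(A)}(S_A,S_A)=\Hom_{H^0(A)}(\top H^0(A),\top H^0(A))$. Since $\top H^0(A)$ is a finite direct sum of simple $H^0(A)$-modules, this is a finite product of matrix rings over division rings, hence semisimple. Each simple has finite length over $R$, because $H^0(A)\in\mod R$ and $R$ is complete local, so simples are killed by $\m$. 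Thus $H^0(B)$ has finite length.

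For $i>0$ I need $\Hom_{\D(A)}(S_A,\Sigma^iS_A)\in\fl R$. My approach:
\begin{itemize}
\item Take a K-projective (semifree) resolution $P\to S_A$ built from cells $\Sigma^{j}A$ with $j\ge0$ (cohomological shift), choosing the minimal one in a fixed degree range. This is possible because $A$ is non-positive and locally finitely generated, so each truncation requires only finitely many cells.
\item Only cells with $j\le i+1$ or so affect $\Hom(-,\Sigma^iS_A)$. Each such cell contributes $\Hom(\Sigma^jA,\Sigma^iS_A)=H^{i-j}(S_A)$, which is $S_A$ or $0$ and so has finite length.
\item Hence $H^i(B)$ is a subquotient of a finite-length module, and so is itself of finite length.
\end{itemize}
Finite generation of the cells in each degree comes from $H^j(A)\in\mod R$ via the standard inductive construction of resolutions. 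This step is the main obstacle: one must verify that a resolution with finitely many cells in each degree exists. An alternative is to argue by d\'evissage on $\sigma^{\ge -n}A$.

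\textbf{$\Phi(S_A)={}_BB$.} This is immediate from the definitions: $\RHom_A(S_A,S_A)=B$ carries the left $B$-module structure given by composition.

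\textbf{$\Phi(A)\simeq\sigma_{\le0}(B^\op)$.} First, $\Phi(A)=\RHom_A(A,S_A)\simeq S_A$ as a complex, with left $B$-action. Its cohomology is concentrated in degree $0$ and equals $\top H^0(A)$.

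Next, the map $A\to S_A$ (the projection onto the top) induces the map of left $B$-modules
\[
\Phi(S_A)=B\to\Phi(A).
\]
In $\D(B^\op)^\op$ this is a morphism $\Phi(A)\to B$ in the reversed direction, which is the shape of the triangle $\sigma_{>0}\to B\to\sigma_{\le0}$ viewed in $\D(B^\op)$. On $H^0$ the map $H^0(B)=\End(S_A)\to\Hom_A(A,S_A)=S_A$ is evaluation at the image of $1$. It is an isomorphism, because $\Hom_{H^0(A)}(\top H^0(A),\top H^0(A))\cong\Hom_{H^0(A)}(H^0(A),\top H^0(A))$, using that every map $H^0(A)\to\top H^0(A)$ factors through the top.

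Therefore the cone of $B\to\Phi(A)$, shifted, has cohomology only in degrees $>0$, namely $H^{>0}(B)$. Since $\Phi(A)$ has cohomology only in degree $0$, the uniqueness of the co-$t$-structure triangle in \cite[Corollary 5.1]{KN} identifies $\Phi(A)$ with $\sigma_{\le0}(B^\op)$. This identification needs care with the $\op$ and the variance of $\Phi$, and it uses the first assertion: $B$ is positive, so \cite[Corollary 5.1]{KN} applies.
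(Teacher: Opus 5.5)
Your proposal is correct and follows essentially the same route as the paper. The third assertion is exactly the paper's argument: apply $\Phi$ to $A\to S_A$, note that $H^0(B)\to H^0(\Phi(A))$ is an isomorphism and that $\Phi(A)$ is concentrated in degree $0$, then use uniqueness of the Keller--Nicol\'as truncation triangle for the positive dg algebra $B^\op$. For the first assertion the paper only says it follows from $H^i(B)\simeq\Hom_{\D(A)}(S_A,\Sigma^iS_A)$; your semifree resolution of $S_A$ with finitely many cells in each degree is the standard way to supply the finite-length claim for $i>0$, and it exists because $A$ is non-positive, $H^j(A)\in\mod R$, and $R$ is noetherian.
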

\begin{proof}
    The first assertion follows from $H^i(B)\simeq\Hom_{\D(A)}(S_A,\Sigma^iS_A)$.
    By definition of $\Phi$, we have $\Phi(S_A)=B$. Applying $\RHom_A(-,S_A)$ to the exact triangle $Y\to A\to S_A\to$
    yields an exact triangle
    \begin{align*}
        \RHom_A(S_A,S_A)\to\RHom_A(A,S_A)\to\RHom_A(Y,S_A)\to.
    \end{align*}
    The cohomology of $\RHom_A(A,S_A)$ is concentrated in degree zero, and $\Hom_{\D(A)}(S_A,S_A)\to\Hom_{\D(A)}(A,S_A)$ is an isomorphism. It follows that $\Phi(A)\simeq\sigma_{\le0}(\Phi(S_A))$.
\end{proof}

\begin{lem}\label{lem:routine}
    The functor $\Phi$ satisfies the following conditions.
    \begin{itemize}
        \item [(1)] For every $X\in\D(A)$ and $Y\in\Dbfl(A)$, we have an isomorphism
        \begin{align*}
            \Phi_{X,Y}\colon\RHom_A(X,Y)\simeq\RHom_{B^\op}(\Phi(Y),\Phi(X)).
        \end{align*}
        \item [(2)] We have an isomorphism $\hocolim_n\Phi(A\ten^\LL_RR\s\m^n)\simeq\Phi(A)$.
        \item [(3)] For every $X\in\D(A)$, we have an isomorphism $\hocolim_m\Phi(\sigma^{\ge-m}(X))\simeq\Phi(X)$.
    \end{itemize}
    
\end{lem}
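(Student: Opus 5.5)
For (1), I will prove the isomorphism by dévissage in the variable $Y$, with $X$ fixed. The map $\Phi_{X,Y}$ is induced by the functor $\Phi$ itself: a morphism $X\to Y$ is sent to $\Phi(Y)\to\Phi(X)$. Concretely, it is the composition map
\begin{align*}
\RHom_A(X,Y)\to\RHom_{B^\op}(\RHom_A(Y,S_A),\RHom_A(X,S_A)),
\end{align*}
which is natural in both variables and compatible with shifts and cones. For $Y=S_A$ we have $\Phi(S_A)={}_BB$ by the preceding lemma, so the target is $\RHom_{B^\op}(B,\Phi(X))\simeq\Phi(X)=\RHom_A(X,S_A)$. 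A direct check shows $\Phi_{X,S_A}$ is the identity under this identification, so it is an isomorphism. Since both sides are exact in $Y$ and commute with direct summands, the set of $Y$ for which $\Phi_{X,Y}$ is an isomorphism is a thick subcategory. It therefore contains $\thick S_A=\thick\S_A$. By the remarks above, $\S_A$ is a simple-minded collection in $\Dbfl(A)$, so $\thick\S_A=\Dbfl(A)$, which proves (1).

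For (2), I will use the tower $A\ten^\LL_RR\s\m^{n+1}\to A\ten^\LL_RR\s\m^n\to\cdots$, all of whose maps come from the surjections $R\s\m^{n+1}\to R\s\m^n$. Applying $\Phi$ turns the tower into a direct system $\Phi(A\ten^\LL_RR\s\m^n)$, and the maps $A\to A\ten^\LL_RR\s\m^n$ induce a map $\hocolim_n\Phi(A\ten^\LL_RR\s\m^n)\to\Phi(A)$. By adjunction,
\begin{align*}
\Phi(A\ten^\LL_RR\s\m^n)\simeq\RHom_R(R\s\m^n,\RHom_A(A,S_A))=\RHom_R(R\s\m^n,S_A)
\end{align*}
as complexes, so it suffices to show $\hocolim_n\RHom_R(R\s\m^n,S_A)\to S_A$ is a quasi-isomorphism. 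Taking cohomology, which commutes with filtered homotopy colimits, this becomes
\begin{align*}
\colim_n\operatorname{Ext}^i_R(R\s\m^n,S_A)\simeq H^i(S_A).
\end{align*}
This is the local cohomology $H^i_\m(S_A)$, since $\Gamma_\m$ is computed by $\colim_n\RHom_R(R\s\m^n,-)$ over a noetherian ring. The complex $S_A$ has finite-length cohomology and is bounded, so it is $\m$-torsion and $\RR\Gamma_\m S_A\simeq S_A$. I expect this to be the main obstacle: one has to check that the identification is the natural map and that the structure as $B^\op$-modules (not just as complexes) is respected. Since the forgetful functor $\D(B^\op)\to\D(R)$ detects isomorphisms, a quasi-isomorphism of underlying complexes is enough.

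For (3), I will write $X\simeq\hocolim_m\sigma^{\ge-m}(X)$ in the form that the homotopy limit is over the tower $\sigma^{\ge-m}X$, that is, $X\simeq\holim_m\sigma^{\ge-m}X$. For a non-positive $A$ this holds because the cohomology of the tower stabilizes degreewise, so the Milnor $\lim^1$ term vanishes. Then $\Phi$, being $\RHom_A(-,S_A)$ with $S_A$ bounded in degree $0$, should convert this into a colimit. Degreewise, $\Hom_{\D(A)}(X,\Sigma^iS_A)$ depends only on $\sigma^{\ge -i-1}X$, because $\RHom_A(\sigma^{<-m}X,S_A)$ is concentrated in degrees $>m$; this uses that $A$ is non-positive, so that $\Hom(\D^{<-m},\D^{\ge0})$ vanishes in the relevant range. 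Hence each cohomology of $\Phi(\sigma^{\ge-m}X)$ stabilizes to that of $\Phi(X)$, and the canonical map from the homotopy colimit is a quasi-isomorphism.
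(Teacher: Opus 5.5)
Your proposal is correct and follows the paper's proof in all three parts. In (1) you reduce to $Y=S_A$ and make explicit the dévissage via $\thick\S_A=\Dbfl(A)$, which the paper leaves implicit. In (2) you use the same adjunction together with $\hocolim_n\RHom_R(R\s\m^n,S_A)\simeq S_A$ for the finite-length module $S_A$. In (3) you use the same degreewise stabilization coming from $\Phi(\sigma^{<-m}X)$ being concentrated in degrees $>m$.

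The opening remark in (3), that $\Phi$ ``converts'' $X\simeq\holim_m\sigma^{\ge-m}X$ into a colimit, is unnecessary, and it is not valid as a general principle for a contravariant $\RHom$. Your degreewise argument does not depend on it, so the proof stands without it.
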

\begin{proof}
    $(1)$:
    It suffices to show that $\RHom_A(X,S_A)\to\RHom_{B^\op}(B,\Phi(X))$ is an isomorphism, which is immediate from the definition of $\Phi$.

    $(2)$: An induced morphism $\hocolim_n\Phi(A\ten^\LL_RR/\m^n)\to\Phi(A)$ factors into isomorphisms in $\D(R)$ as follows:
    \begin{align*}
        \hocolim_n\Phi(A\ten^\LL_RR\s\m^n)
        &=
        \hocolim_n\RHom_A(A\ten^\LL_RR\s\m^n,S_A) \\
        &\simeq
        \hocolim_n\RHom_A(A,\RHom_R(R\s\m^n,S_A)) \\
        &\simeq
        \RHom_A(A,\hocolim_n\RHom_R(R\s\m^n,S_A))  \\
        &\simeq
        \RHom_A(A,S_A)\\
        &=
        \Phi(A),
    \end{align*}
    where the fourth isomorphism uses the fact that $S_A$ has finite length as an $R$-module (see \cite[\href{https://stacks.math.columbia.edu/tag/0955}{Tag 0955}]{stacks-project}).

    $(3)$: Since $S_A\in\D(A)^{\ge0}$, note that $\Phi(\sigma^{<-m}(X))\in\D(B^\op)_{>m}$. This yields isomorphisms
    \begin{align*}
        H^i(\hocolim_m\Phi(\sigma^{\ge-m}(X)))\simeq
        \colim_mH^i(\Phi(\sigma^{\ge-m}(X)))=
        H^i(\Phi(X))
    \end{align*}
    for every $i\in\ZZ$.
\end{proof}

\begin{prop}
    The functor $\Phi$ induces an isomorphism
    $\REnd_A(A)\simeq\REnd_{B^\op}(\Phi(A))$. In particular, $\Phi$ restricts to an equivalence $\per(A)\simeq\Dbfl(B^\op)^\op$.
\end{prop}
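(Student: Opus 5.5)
We need to show that the map $\RHom_A(A,A)\to\RHom_{B^\op}(\Phi(A),\Phi(A))$ induced by $\Phi$ is an isomorphism, and then deduce the equivalence.

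\textbf{Step 1: approximate by finite-length objects.} The difficulty is that $A$ itself is not in $\Dbfl(A)$, so \cref{lem:routine}(1) does not apply directly. We approximate $A$ by the objects $A_{n,m}:=\sigma^{\ge -m}(A\ten^\LL_RR/\m^n)$. Each $A\ten^\LL_RR/\m^n$ has cohomology $H^i$ of finite length in every degree, because $A$ is locally finitely generated and $R/\m^n$ has finite length. Since $A$ is non-positive, these cohomologies are bounded above, so the truncation $A_{n,m}$ lies in $\Dbfl(A)$. By \cref{lem:routine}(1), for each $n,m$ the map
\begin{align*}
\RHom_A(A,A_{n,m})\to\RHom_{B^\op}(\Phi(A_{n,m}),\Phi(A))
\end{align*}
is an isomorphism.

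\textbf{Step 2: pass to the limit.} We take homotopy limits over $m$ and then over $n$.

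On the left-hand side, $\RHom_A(A,-)$ commutes with homotopy limits. We have $\holim_m\sigma^{\ge-m}(Y)\simeq Y$ (the left completeness of the standard $t$-structure on $\D(A)$). We also have $\holim_n(A\ten^\LL_RR/\m^n)\simeq A$: this is derived $\m$-completeness of $A$, which holds because each $H^i(A)$ is finitely generated over the complete ring $R$. The $\lim^1$ terms vanish by Mittag-Leffler for finitely generated modules over a complete local ring. Hence the left side becomes $\RHom_A(A,A)$.

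On the right-hand side, $\RHom_{B^\op}(-,\Phi(A))$ sends homotopy colimits to homotopy limits. By \cref{lem:routine}(3) and (2), $\hocolim_n\hocolim_m\Phi(A_{n,m})\simeq\Phi(A)$. For (3) to apply to the double system, we need either to exchange the order of the two colimits or to use a diagonal cofinal system $A_{n,n}$.

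\textbf{Main obstacle: compatibility.} We must check that the limit of the isomorphisms from Step 1 is exactly the map induced by $\Phi$. This should follow from the naturality of $\Phi_{X,Y}$ in both variables. A cleaner route is to identify both sides with $\holim_{n,m}\RHom_A(A,A_{n,m})$ and observe that the comparison maps commute. The delicate point is making the double index rigorous. I would first try the diagonal system, checking that it computes both the holim on the $A$-side and the hocolim on the $\Phi$-side.

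\textbf{Step 3: deduce the equivalence.} The isomorphism $\REnd_A(A)\simeq\REnd_{B^\op}(\Phi(A))$ shows that $\Phi$ is fully faithful on $\thick A=\per(A)$, by dévissage, since $\Phi$ is exact. Its image is $\thick\Phi(A)$. The earlier lemma gives $\Phi(A)\simeq\sigma_{\le0}(B^\op)$, and \cref{prop:proj} says this is silting in $\Dbfl(B^\op)$, so in particular $\thick\Phi(A)=\Dbfl(B^\op)$. Therefore $\Phi$ restricts to an equivalence $\per(A)\simeq\Dbfl(B^\op)^\op$.
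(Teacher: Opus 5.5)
Your proposal is correct and follows essentially the same route as the paper: approximate $A$ by $\sigma^{\ge-m}(A\ten^\LL_RR/\m^n)\in\Dbfl(A)$ and apply \cref{lem:routine}(1) there. Then pass to $\holim_n\holim_m$, using derived completeness ($A_R\in\K^-(\proj R)$) and left completeness on the left side and \cref{lem:routine}(3),(2) on the right, with compatibility given by a naturality diagram exactly as you suggest.

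No exchange of colimits or diagonal system is needed, since taking the inner limit over $m$ first and then over $n$ already matches the order in which \cref{lem:routine}(3) and (2) apply. Your Step 3 correctly fills in the ``in particular'' part, which the paper leaves implicit.
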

\begin{proof}
    We obtain the following commutative diagram:
    \begin{center}
        \begin{tikzcd}
            \RHom_A(A,A)\ar[r,"\Phi"]\ar[d,"{A_R\in\K^-(\proj R) }"',"\sim"sloped] & \RHom_{B^\op}(\Phi(A),\Phi(A))\ar[d,"\sim"'sloped,"\text{ \cref{lem:routine}}"] \\
            \holim_n\RHom_A(A,A\ten_R^\LL R/\m^n)\ar[r]\ar[d,"\sim"sloped] & \holim_n\RHom_{B^\op}(\Phi(A\ten_R^\LL R/\m^n),\Phi(A))\ar[d,"\sim"'sloped,"\text{ \cref{lem:routine}}"] \\
            \holim_n\holim_m\RHom_A(A,\sigma^{\ge-m}(A\ten_R^\LL R/\m^n))\ar[r,"\sim","\text{\cref{lem:routine}}"'{yshift=-2pt}] & \holim_n\holim_m\RHom_{B^\op}(\Phi(\sigma^{\ge-m}(A\ten_R^\LL R/\m^n)),\Phi(A)).
        \end{tikzcd}
    \end{center}
    Hence $\Phi\colon\REnd_A(A)\to\REnd_{B^\op}(\Phi(A))$ is an isomorphism.
\end{proof}

We summarize the above arguments.
\begin{prop}\label{prop:summary}
    The functor $\Phi\colon\D(A)\to\D(B^\op)^\op$ satisfies the following properties:
    \begin{itemize}
        \item $\Phi$ restricts to an equivalence $\Dbfl(A)\simeq\per(B^\op)^\op$ and $\per(A)\simeq\Dbfl(B^\op)^\op$,
        \item for every $X\in\per(A)$ and $Y\in\Dbfl(A)$, 
        \begin{align*}
            \Phi_{X,Y}\colon\RHom_A(X,Y)\to\RHom_B^\op(\Phi(Y),\Phi(X))
        \end{align*}
        is an isomorphism.
    \end{itemize}
\end{prop}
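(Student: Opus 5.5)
We need to prove Proposition summary: $\Phi$ restricts to an equivalence $\Dbfl(A)\simeq\per(B^\op)^\op$ and $\per(A)\simeq\Dbfl(B^\op)^\op$, and $\Phi_{X,Y}$ is an isomorphism for $X\in\per(A)$, $Y\in\Dbfl(A)$.

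The plan is to assemble the previous two results. The second bullet is immediate from \cref{lem:routine}(1), which gives the isomorphism for all $X\in\D(A)$ and $Y\in\Dbfl(A)$; we only restrict to $X\in\per(A)$.

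For $\Dbfl(A)\simeq\per(B^\op)^\op$, we use \cref{lem:routine}(1) with $X,Y\in\Dbfl(A)$, so $\Phi$ is fully faithful on $\Dbfl(A)$. Next we identify the essential image.
\begin{itemize}
\item Since $A$ is non-positive and $H^0(A)\in\mod R$, every $Y\in\Dbfl(A)$ lies in $\Filt$ of shifts of $\S_A$, using the standard $t$-structure with heart $\fl H^0(A)$ and bounded cohomology. Hence $\Dbfl(A)=\thick S_A$.
\item $\Phi$ is exact and $\Phi(S_A)={}_BB$, so $\Phi(\Dbfl(A))\subseteq\thick{}_BB=\per(B^\op)$.
\item Conversely, the essential image of a fully faithful exact functor on a thick subcategory is closed under shifts and cones. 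It is also closed under summands: an idempotent on $\Phi(Y)$ comes from an idempotent on $Y$, which splits because $\Dbfl(A)\subseteq\D(A)$ is idempotent complete, being closed under summands in $\D(A)$. So the image contains $\thick B=\per(B^\op)$.
\end{itemize}

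For $\per(A)\simeq\Dbfl(B^\op)^\op$, the previous proposition gives that $\Phi$ induces a quasi-isomorphism $\REnd_A(A)\simeq\REnd_{B^\op}(\Phi(A))$. By dévissage, $\Phi$ is then fully faithful on $\thick A=\per(A)$: the class of pairs $(X,X')$ for which $\Phi$ induces an isomorphism on $\Hom$ is closed under shifts, cones in each variable (five lemma) and summands.

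The main obstacle is showing that the image is exactly $\Dbfl(B^\op)$.
\begin{itemize}
\item $\Phi(A)\simeq\sigma_{\le0}(B^\op)$, and $B$ is locally finite-length positive, so by \cref{prop:proj} (applied to $B^\op$, which is also positive with $H^0$ of finite length) $\sigma_{\le0}(B^\op)$ is a silting object in $\Dbfl(B^\op)$. In particular $\thick\sigma_{\le0}(B^\op)=\Dbfl(B^\op)$.
\item So $\Phi(\per(A))=\Phi(\thick A)$ lands in $\Dbfl(B^\op)$.
\item By the same image argument (closure under cones and shifts, with summands split in $\per(A)$), the image is all of $\thick\Phi(A)=\Dbfl(B^\op)$.
\end{itemize}
One must check that \cref{prop:proj} applies to $B^\op$ rather than $B$, i.e.\ that $B^\op$ is again positive with $H^0(B^\op)\in\fl R$, which holds since cohomology is unchanged. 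This completes both equivalences.
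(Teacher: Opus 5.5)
Your proposal is correct and is essentially the paper's argument. The paper gives no separate proof of this proposition; it presents it as a summary of two earlier results:
\begin{itemize}
\item \cref{lem:routine}(1), which gives the second bullet and, together with $\Dbfl(A)=\thick S_A$ and $\Phi(S_A)={}_BB$, the equivalence $\Dbfl(A)\simeq\per(B^\op)^\op$;
\item the preceding proposition $\REnd_A(A)\simeq\REnd_{B^\op}(\Phi(A))$, combined with $\Phi(A)\simeq\sigma_{\le0}(B^\op)$ and \cref{prop:proj}, which gives $\per(A)\simeq\Dbfl(B^\op)^\op$.
\end{itemize}
You simply spell out the d\'evissage and essential-image steps (closure under cones, shifts and split idempotents) that the paper leaves implicit.
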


The following theorem is the main theorem in this section.
\begin{thm}\label{thm:ST}
    The following diagram is commutative:
    \begin{center}
        \begin{tikzcd}
            \silt(\per(A))\ar[r,leftrightarrow,"\Phi"]\ar[dd,"\phi","\text{\cref{prop:silt-SMC} }"'] & \silt(\Dbfl(B^\op)) \\
            & & \\
            \SMC(\Dbfl(A))\ar[r,leftrightarrow,"\Phi"] & \SMC(\per(B^\op))\ar[uu,"\psi","\text{ \cref{prop:silt-SMC}}"'].
        \end{tikzcd}
    \end{center}
    In particular, $\phi\colon\silt(\per(A))\to\SMC(\Dbfl(A))$ is bijective.
\end{thm}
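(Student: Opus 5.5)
The argument has three parts. First, $\Phi$ gives bijections along the horizontal arrows. Second, the square commutes. Third, $\psi$ is injective, which forces $\phi$ to be bijective.

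By \cref{prop:summary}, $\Phi$ restricts to anti-equivalences $\per(A)\simeq\Dbfl(B^\op)^\op$ and $\Dbfl(A)\simeq\per(B^\op)^\op$. An anti-equivalence $F\colon\T\to\T'^{\op}$ of triangulated categories sends $\Sigma$ to $\Sigma^{-1}$, so $\Hom(FX,\Sigma^{n}FY)\simeq\Hom(Y,\Sigma^{-n}X)$. It also preserves $\add$, $\thick$, direct sums, endomorphism division rings and orthogonality. Hence:
\begin{itemize}
\item the silting conditions $\Hom(M,\Sigma^{>0}M)=0$ and $\thick M=\per(A)$ transfer to the same conditions for $\Phi(M)$ in $\Dbfl(B^\op)$;
\item the simple-minded conditions transfer in the same way, with the $\Hom(\L,\Sigma^{<0}\L)$-vanishing preserved because both arguments are swapped.
\end{itemize}
Equivalence classes and isomorphism classes are respected, so the horizontal arrows are bijections.

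For commutativity, take $M\in\silt(\per(A))$ and put $\L:=\Phi(\phi(M))\in\SMC(\per(B^\op))$. We need $\psi(\L)=\Phi(M)$, and by the characterization in \cref{prop:silt-SMC}(2) it suffices to check $\RHom_{B^\op}(\L,\Phi(M))\in\D(R)^0$. Since $M\in\per(A)$ and $\phi(M)\in\Dbfl(A)$, \cref{prop:summary} gives
\begin{align*}
\RHom_{B^\op}(\Phi(\phi(M)),\Phi(M))\simeq\RHom_A(M,\phi(M)),
\end{align*}
and the right-hand side lies in $\D(R)^0$ by the defining property of $\phi$. The characterization in \cref{prop:silt-SMC}(2) is a uniqueness statement; it is proved as in (1), using the equivalence induced by the simple-minded collection and \cref{rmk:silt}. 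It therefore identifies $\psi(\L)$ with $\Phi(M)$ up to equivalence, and the square commutes: $\psi\circ\Phi=\Phi\circ\phi$.

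It remains to see that $\phi$ is bijective. Commutativity gives $\phi=\Phi^{-1}\circ\psi\circ\Phi$ on the silting side, so $\phi$ is injective exactly when $\psi$ is, and it suffices to show $\psi$ is injective. Suppose $\psi(\L)=\psi(\L')=N$. Then both $\RHom(\L,N)$ and $\RHom(\L',N)$ are concentrated in degree $0$. Transport this through the equivalence $\D(B^\op)\simeq\D(\REnd(\bigoplus\L))$ and the Keller--Nicolas setup of \cref{prop:proj}. There the silting object $N$ plays the role of $\sigma_{\le0}$ of the regular module, and it determines a bounded co-$t$-structure on $\Dbfl(B^\op)$. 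The objects of $\per(B^\op)$ with $\RHom(-,N)\in\D(R)^0$ then form the heart of the $t$-structure dual to it, which is generated by $\L$. So $\L'\subseteq\Filt\L$, and \cref{rmk:SMC} gives $\L'\simeq\L$.

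For surjectivity of $\phi$, take $\S\in\SMC(\Dbfl(A))$ and set $M:=\Phi^{-1}(\psi(\Phi(\S)))$. Then
\begin{align*}
\RHom_A(M,\S)\simeq\RHom_{B^\op}(\Phi(\S),\psi(\Phi(\S)))\in\D(R)^0,
\end{align*}
so $\phi(M)=\S$ by the characterization in \cref{prop:silt-SMC}(1). This surjectivity step does not depend on the injectivity of $\psi$, so even if the injectivity argument needs repair, surjectivity stands.

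The step I expect to be the main obstacle is the injectivity of $\psi$. It requires knowing that the class $\{X\mid\RHom(X,N)\in\D(R)^0\}$ is exactly $\Filt\L$. The natural route is a Koszul-duality identification of this class with the standard heart after passing to $\REnd(\bigoplus\L)$. This is the point where the positivity and local finite-length hypotheses on $B$ are genuinely used.
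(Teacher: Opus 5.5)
Your commutativity step and your surjectivity step together are exactly the paper's proof. The paper shows $\psi\circ\Phi\circ\phi=\Phi$ on $\silt(\per(A))$ and $\phi\circ\Phi^{-1}\circ\psi=\Phi^{-1}$ on $\SMC(\per(B^\op))$. Each identity comes from the Hom-isomorphism of \cref{prop:summary} together with the uniqueness statements in \cref{prop:silt-SMC}. These two identities say that $\psi$ and $\Phi\circ\phi\circ\Phi^{-1}$ are mutually inverse, so $\phi$ is bijective.

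Where you go wrong is the bijectivity bookkeeping.
\begin{itemize}
\item The formula $\phi=\Phi^{-1}\circ\psi\circ\Phi$ does not typecheck.
\item The reduction ``$\phi$ is injective exactly when $\psi$ is'' is incorrect. Since $\psi\circ\Phi\circ\phi=\Phi$ and the top $\Phi$ is a bijection, $\phi$ is injective (and $\psi$ surjective) immediately from commutativity.
\item What is equivalent to injectivity of $\psi$ is \textit{surjectivity} of $\phi$, and your own surjectivity computation already proves it. Suppose $\psi(\L)=\psi(\L')=N$. By \cref{prop:summary}, both $\RHom_A(\Phi^{-1}N,\Phi^{-1}\L)\simeq\RHom_{B^\op}(\L,N)$ and $\RHom_A(\Phi^{-1}N,\Phi^{-1}\L')$ lie in $\D(R)^0$. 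The uniqueness in \cref{prop:silt-SMC}(1) then gives $\Phi^{-1}\L\simeq\phi(\Phi^{-1}N)\simeq\Phi^{-1}\L'$.
\end{itemize}

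So the co-$t$-structure/heart paragraph should be deleted, not repaired. Its key assertion $\{X\mid\RHom(X,N)\in\D(R)^0\}=\Filt\L$ is stated without proof, and it is not needed. The ``reverse'' uniqueness on the $B^\op$ side is transported through $\Phi$ from the uniqueness of $\phi$ on the $A$ side. No further use of positivity of $B$ is required at that point.

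There is also a sign slip in your first part. For an anti-equivalence $F$ with $F\Sigma\simeq\Sigma^{-1}F$, one has $\Hom(FX,\Sigma^nFY)\simeq\Hom(Y,\Sigma^nX)$, not $\Hom(Y,\Sigma^{-n}X)$. With your sign, the silting and simple-minded vanishing conditions would be exchanged rather than preserved. Your stated conclusions match the correct formula, so this is only a typo.
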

\begin{proof}
    We show that $\Phi=\psi\circ\Phi\circ\phi\colon\silt(\per(A))\to\silt(\Dbfl(B^\op))$. Let $M\in\silt(\per(A))$. By \cref{prop:summary}, we have $\RHom_{B^\op}(\Phi\circ\phi(M),\Phi(M))\in\D(R)^0$. 
    Since $\psi(\Phi\circ\phi(M))\in\silt(\Dbfl(B^\op)$ is characterized by $\RHom_{B^\op}(\Phi\circ\phi(M),\psi(\Phi\circ\phi(M)))\in\D(R)^0$, we have $\psi\circ\Phi\circ\phi(M)\simeq\Phi(M)$. 

    A similar argument shows that $\Phi^{-1}=\phi\circ\Phi^{-1}\circ\psi$.
\end{proof}

%%%%%%%%%%%%%%%%%%%%%%%%%%%%%%%%%%%%%%%%%%%%%%%%%%%%%%%%%%%%%%%%%%%%%%%%%%%%%%%%%%%%%%%
\section{Main results}\label{section:bij}
In this section, we fix
\begin{itemize}
    \item a locally finitely generated non-positive dg $R$-algebra $A$,
    \item a morphism of commutative complete local noetherian rings $(R,\m,k)\to(S,\n,k)$ with a common residue field $k$.
\end{itemize}
This section is devoted to the proof of \cref{thm:R-S}, which asserts that the extension of scalars functor $M\mapsto M\ten^\LL_RS$ induces a bijection between the isomorphism classes of silting complexes over $A$ and those over $A\ten^\LL_RS$.

\subsection{Silting objects}

In this subsection, we show that $-\ten^\LL_RS$ sends silting objects in $\per(A)$ to silting objects in $\per(A\ten^\LL_RS)$ (=\cref{prop:silt}).

\begin{lem}\label{lem:nonposi}
    Let $M\in\per(A)$ and $N\in\D(A)$. If $\RHom_A(M,N)\in\D(R)^{\le0}$, then $\RHom_{A\ten^\LL_RS}(M\ten^\LL_RS,N\ten^\LL_RS)\in\D(S)^{\le0}$.
\end{lem}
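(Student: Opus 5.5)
We use the extension-of-scalars adjunction to reduce everything to base change along $R\to S$ for complexes of $R$-modules. Since $M\ten^\LL_RS\simeq M\ten^\LL_A(A\ten^\LL_RS)$ and restriction of scalars along $A\to A\ten^\LL_RS$ is right adjoint to $-\ten^\LL_A(A\ten^\LL_RS)$, we have
\begin{align*}
\RHom_{A\ten^\LL_RS}(M\ten^\LL_RS,N\ten^\LL_RS)\simeq\RHom_A(M,N\ten^\LL_RS).
\end{align*}
The claim then reduces to the natural map
\begin{align*}
\RHom_A(M,N)\ten^\LL_RS\to\RHom_A(M,N\ten^\LL_RS)
\end{align*}
being an isomorphism in $\D(R)$ (hence in $\D(S)$, with the evident $S$-structures).

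To prove this, we note that both sides are exact functors of $M$ that commute with shifts and direct summands. The class of $M$ for which the map is an isomorphism is therefore a thick subcategory of $\per(A)$. For $M=A$ both sides equal $N\ten^\LL_RS$, so the map is an isomorphism for all $M\in\per(A)=\thick A$. The only care needed is the compatibility of $S$-module structures: one should write $N\ten^\LL_RS$ as a dg $A\ten_R S'$-module for a cofibrant or flat model $S'$ of $S$ over $R$, and check that the adjunction isomorphism is $S$-linear. This is routine bookkeeping.

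It then remains to show that $X\ten^\LL_RS\in\D(S)^{\le0}$ whenever $X:=\RHom_A(M,N)\in\D(R)^{\le0}$. This follows from right exactness of the derived tensor product. Computing $X\ten^\LL_RS$ via a flat resolution of $S$ over $R$ (a complex concentrated in non-positive degrees), or by truncating $X$ to a complex of modules in degrees $\le0$, the total complex has no cohomology in positive degrees. Hence $H^{>0}(X\ten^\LL_RS)=0$. No finiteness hypotheses are needed here.

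The main point to handle carefully is the first step: identifying the $\RHom$ over the derived base change $A\ten^\LL_RS$ with $\RHom_A(M,-\ten^\LL_RS)$ and commuting $\ten^\LL_RS$ past $\RHom_A(M,-)$. Perfectness of $M$ is exactly what makes this work, and nothing beyond the thick-subcategory argument is required.
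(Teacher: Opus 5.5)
Your proposal is correct and follows the paper's own argument. The paper likewise uses $M\in\thick A$ to obtain the natural isomorphism $\RHom_A(M,N)\ten^\LL_RS\simeq\RHom_{A\ten^\LL_RS}(M\ten^\LL_RS,N\ten^\LL_RS)$, and then concludes from $\D(R)^{\le0}\ten^\LL_RS\subseteq\D(S)^{\le0}$; you simply spell out the adjunction and thick-subcategory steps. The $S$-linearity bookkeeping you flag is not needed, since the vanishing of $H^{>0}$ can be checked after restriction to $R$.
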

\begin{proof}
    Since $M\in\per(A)=\thick A$, there is a natural isomorphism
    \begin{align*}
        \RHom_A(M,N)\ten^\LL_RS\simeq\RHom_{A\ten^\LL_RS}(M\ten^\LL_RS,N\ten^\LL_RS).   
    \end{align*}    
    The result then follows from $\D(R)^{\le0}\ten^\LL_RS\subseteq\D(S)^{\le0}$.
\end{proof}

\begin{prop}\label{prop:silt}
    Let $M\in\per(A)$. If $M$ is a silting object in $\per(A)$, then $M\ten^\LL_RS$ is a silting object in $\per(A\ten^\LL_RS)$.
\end{prop}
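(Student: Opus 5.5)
We have to check the two defining conditions of a silting object for $M\ten^\LL_RS$ in $\per(A\ten^\LL_RS)$. Both should follow from general properties of the exact functor $F:=-\ten^\LL_RS=-\ten^\LL_A(A\ten^\LL_RS)\colon\D(A)\to\D(A\ten^\LL_RS)$.

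First, $F$ lands in the perfect category. The functor $F$ is a triangle functor that commutes with shifts and sends $A$ to $A\ten^\LL_RS$. Hence it sends $\thick A=\per(A)$ into $\thick(A\ten^\LL_RS)=\per(A\ten^\LL_RS)$. In particular, $M\ten^\LL_RS$ is perfect.

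Second, the vanishing $\Hom(M\ten^\LL_RS,\Sigma^{>0}M\ten^\LL_RS)=0$. Since $M$ is silting in $\per(A)$, we have $\Hom_{\D(A)}(M,\Sigma^iM)=0$ for all $i>0$, that is, $\RHom_A(M,M)\in\D(R)^{\le0}$. Applying \cref{lem:nonposi} with $N=M$ gives
\begin{align*}
\RHom_{A\ten^\LL_RS}(M\ten^\LL_RS,M\ten^\LL_RS)\in\D(S)^{\le0},
\end{align*}
and its positive cohomology is exactly the required vanishing.

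Third, generation: $\thick(M\ten^\LL_RS)=\per(A\ten^\LL_RS)$. Since $\thick M=\per(A)\ni A$, the object $A$ is obtained from $M$ by finitely many shifts, cones and direct summands. The functor $F$ preserves all three operations, so $A\ten^\LL_RS\in\thick(M\ten^\LL_RS)$. Consequently $\per(A\ten^\LL_RS)=\thick(A\ten^\LL_RS)\subseteq\thick(M\ten^\LL_RS)$. The reverse inclusion holds by the first step.

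Most of this is formal. The only substantive input is the base-change isomorphism $\RHom_A(M,N)\ten^\LL_RS\simeq\RHom_{A\ten^\LL_RS}(M\ten^\LL_RS,N\ten^\LL_RS)$ for perfect $M$, which \cref{lem:nonposi} already packages. I expect no real obstacle beyond making sure that $F$ is a well-defined triangle functor on dg modules. This holds because $F$ is the derived tensor product with the dg bimodule $A\ten^\LL_RS$, taking $A$ to be, for instance, $R$-flat after cofibrant replacement.
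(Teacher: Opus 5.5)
Your proposal is correct and follows the paper's proof. As in the paper, you get generation from $A\in\thick M\Rightarrow A\ten^\LL_RS\in\thick(M\ten^\LL_RS)$ and the vanishing $\Hom(M\ten^\LL_RS,\Sigma^{>0}M\ten^\LL_RS)=0$ from \cref{lem:nonposi} applied with $N=M$; you also spell out the containment in $\per(A\ten^\LL_RS)$, which the paper leaves implicit.
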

\begin{proof}
    Since $A\in\thick M$, 
    \begin{align*}
        A\ten^\LL_RS\in(\thick M)\ten^\LL_RS\subseteq\thick(M\ten^\LL_RS).
    \end{align*}
    Therefore, $M\ten^\LL_RS$ generates $\per(A\ten^\LL_RS)$ as a thick subcategory. By \cref{lem:nonposi}, $M\ten^\LL_RS$ is a silting object. 
\end{proof}

%%%%%%%%%%%%%%%%%%%%%%%%%%%%%%%%%%%%%%%%%%%%%%%%%%%%%%%%%
\subsection{Simple-minded collections}
In this subsection, we show that the forgetful functor $\iota\colon\Dbfl(A\ten^\LL_RR/\m)\to\Dbfl(A)$ sends simple-minded collections in $\Dbfl(A\ten^\LL_RR/\m)$ to simple-minded collections in $\Dbfl(A)$ (=\cref{prop:SMC}).

There is a quasi-isomorphism $RQ\to R/\m$, where $Q$ is a negatively graded locally finite dg-quiver (see for example \cite[Construction 2.6]{O17}). $A\ten_RRQ=AQ$ gives a model of $A\ten^\LL_RR/\m$.

For a subset $I\subseteq\ZZ$, we put $Q_1^I:=\{a\in Q_1\mid |a|\in I\}$.

We obtain a short exact sequence of $RQ$-bimodules
\begin{align*}
    0\to RQ\ten_RRQ_1\ten_RRQ\to RQ\ten_RRQ\to RQ\to0.
\end{align*}
Here we equip $RQ\ten_RRQ_1\ten_RRQ$ with a differential so that it becomes a subcomplex of $RQ\ten_RRQ$. Then $RQ\ten_RQ_1^{\ge-i}\ten_RRQ$ is a subcomplex of $RQ\ten_RRQ_1\ten_RRQ$. We define $RQ\ten_RRQ_1^{\le-i}\ten_RRQ:=(RQ\ten_RRQ_1\ten_RRQ)/(RQ\ten_RRQ_1^{>-i}\ten_RRQ)$.

\begin{lem}\label{lem:ind}
    Let $X$ be a dg $A\ten_RRQ$-module. Then for every $k>0$, we have
    \begin{align*}
        \iota(X)\ten_RRQ\in\Filt\left(\bigcup_{i=1}^k\Sigma^{k-i}\left(\iota(X)\ten_RRQ_1^{\le-i}\ten_RRQ\right),\Sigma^{>0}X\right)\ast X.
    \end{align*}
\end{lem}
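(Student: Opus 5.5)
We would prove the statement by induction on $k$, starting from the short exact sequence of $RQ$-bimodules
\begin{align*}
0\to RQ\ten_RRQ_1\ten_RRQ\to RQ\ten_RRQ\to RQ\to0 .
\end{align*}
Applying $\iota(X)\ten_{RQ}-$ to it gives a short exact sequence of dg $A\ten_RRQ$-modules,
\begin{align*}
0\to \iota(X)\ten_RRQ_1\ten_RRQ\to \iota(X)\ten_RRQ\to X\to 0.
\end{align*}
All terms are semifree over $RQ$ on the relevant side, so this tensor product is exact, and we obtain an exact triangle
\begin{align*}
\iota(X)\ten_RRQ_1\ten_RRQ\to\iota(X)\ten_RRQ\to X\to .
\end{align*}
Hence $\iota(X)\ten_RRQ\in\{\iota(X)\ten_RRQ_1\ten_RRQ\}\ast X$. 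The task is then to decompose the first term using the filtration by the degrees of arrows.

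The case $k=1$ goes as follows. The subcomplex $\iota(X)\ten_RRQ_1^{>-1}\ten_RRQ$ is zero, because $Q$ is negatively graded: arrows have degree $\le -1$, so $Q_1^{>-1}=\emptyset$. Therefore $\iota(X)\ten_RRQ_1\ten_RRQ=\iota(X)\ten_RRQ_1^{\le-1}\ten_RRQ$, and the triangle above gives the claim with $\Sigma^{0}$.

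For the inductive step we use the degreewise filtration. There is a short exact sequence of complexes
\begin{align*}
0\to \iota(X)\ten_RRQ_1^{>-i}\ten_RRQ\to\iota(X)\ten_RRQ_1\ten_RRQ\to \iota(X)\ten_RRQ_1^{\le -i}\ten_RRQ\to0 .
\end{align*}
The graded piece $\iota(X)\ten_R R Q_1^{\{-i\}}\ten_RRQ$ for arrows of degree exactly $-i$ must be identified with $\Sigma^{i}$ of $\iota(X)\ten_R R(Q_1^{\{-i\}})_{\deg 0}\ten_RRQ$, where $R(Q_1^{\{-i\}})_{\deg 0}$ denotes the span of these arrows placed in degree $0$. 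The key point is that the differential on this piece is induced only by the differentials of $X$ and $RQ$. The differential of an arrow lands in paths of arrows of strictly larger degree, so it contributes only to lower filtration layers. Thus this piece is a direct sum of shifted copies of $\iota(X)\ten_RRQ$, namely $\Sigma^{i}$ applied to a sum of copies.

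We then apply the $k$-th inductive statement to each copy of $\iota(X)\ten_RRQ$ appearing there, after shifting by $\Sigma^{i}$ with $i\ge1$. The resulting $X$-terms land in $\Sigma^{>0}X$, and the $\iota(X)\ten_RRQ_1^{\le -j}\ten_RRQ$ terms acquire extra shifts. Reorganizing with the octahedral axiom, using the associativity $(\U\ast\V)\ast\mathcal W=\U\ast(\V\ast\mathcal W)$, and noting that $\Filt$ is closed under $\ast$, we obtain the bound for $k+1$. The shift exponents $k-i$ come from this bookkeeping: peeling off the degree-$(-1)$ layer shifts the remaining part once more.

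The main obstacle is making the induction produce exactly the shifts $\Sigma^{k-i}$ rather than some other pattern. We would first try the more concrete route of passing from $k$ to $k+1$ by splitting
\begin{align*}
\iota(X)\ten_RRQ_1^{\le -i}\ten_RRQ\in \{\iota(X)\ten_RRQ_1^{\le-(i+1)}\ten_RRQ\}\ast\{\text{degree }-i\text{ layer}\}.
\end{align*}
Here the degree-$(-i)$ layer is a sum of copies of $\Sigma^i\iota(X)\ten_RRQ$, which we would expand once more via the basic triangle. If the bookkeeping fails, the fallback is to enlarge the class by allowing all nonnegative shifts of the $\iota(X)\ten_RRQ_1^{\le -i}\ten_RRQ$ terms. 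The real constraint is that the $X$-terms other than the last one must come with positive shifts.
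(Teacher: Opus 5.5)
The ingredients match the paper's. Write $P:=\iota(X)\ten_RRQ$ and $K_i:=\iota(X)\ten_RRQ_1^{\le-i}\ten_RRQ$. The base triangle gives $P\in K_1\ast X$, and the degree-$(-i)$ layer of $K_i$ is a sum of copies of $\Sigma^iP$; local finiteness of $Q$ makes it finite, which you need for $\add$. However, the inductive step as you primarily describe it does not close, and you leave open the shift bookkeeping, which is the whole content of the lemma.

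In the $k$-th statement, $K_i$ occurs as $\Sigma^{k-i}K_i$. So for every $i$ the layer you must expand consists of copies of $\Sigma^{k}P$, not $\Sigma^iP$. Applying the $k$-th statement to $\Sigma^kP$ produces terms $\Sigma^{2k-j}K_j$. These are not among the generators $\Sigma^{k+1-j}K_j$ of the $(k+1)$-st class, and $\Filt$ is not closed under shifts. The other reading also fails: if you peel off only the degree-$(-1)$ layer of $K_1$ and apply the $k$-th statement to $\Sigma P$, a bare $K_2$ is left over, which is not an allowed generator for $k\ge2$.

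Your splitting is also in the wrong order. The differential of an arrow of degree $-j$ involves only arrows of degree $>-j$. Hence the degree-$(-i)$ arrows span a subcomplex of $K_i$ with quotient $K_{i+1}$. The filtration therefore gives $K_i\in\Sigma^i\add P\ast K_{i+1}$, not $K_{i+1}\ast\Sigma^i\add P$. This is harmless here only because everything ends up inside $\Filt$.

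What closes the argument is your ``concrete route'', which is exactly the paper's proof. It must be run on the shifted pieces, expanding $\Sigma^kP$ by the base triangle only. Put $\mathcal C_k:=\Filt\bigl(\bigcup_{i=1}^k\Sigma^{k-i}K_i,\Sigma^{>0}X\bigr)$. For $1\le i\le k$,
\[
\Sigma^{k-i}K_i\in\Sigma^k\add P\ast\Sigma^{k-i}K_{i+1},\qquad \Sigma^kP\in\Sigma^kK_1\ast\Sigma^kX .
\]
Here $\Sigma^kK_1=\Sigma^{(k+1)-1}K_1$, $\Sigma^{k-i}K_{i+1}=\Sigma^{(k+1)-(i+1)}K_{i+1}$, and $\Sigma^kX\in\Sigma^{>0}X$ since $k\ge1$. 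Hence $\mathcal C_k\subseteq\mathcal C_{k+1}$, and $P\in\mathcal C_1\ast X$ yields the claim for all $k$. No octahedral rearrangement is needed beyond the extension-closedness of $\Filt$.

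Your fallback of allowing all nonnegative shifts of the $K_i$ would not prove the stated lemma, and it would be useless in \cref{lem:posi}. When $X\in\D(A\ten^\LL_RR/\m)^{\le b}$, the exact exponents $k-i$ are what put every $\Sigma^{k-i}K_i$ into $\D(A\ten^\LL_RR/\m)^{\le b-k}$, because $K_i$ is built from the $\Sigma^jP$ with $j\ge i$. So the real constraint is this uniform downward shift, not just the positivity of the $X$-shifts.
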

\begin{proof}
    Applying $X\ten_{RQ}-$ to the exact sequence
    \begin{align*}
        0\to RQ\ten_RRQ_1\ten_RRQ\to RQ\ten_RRQ\to RQ\to0,
    \end{align*}
    we obtain $\iota(X)\ten_RRQ\in(\iota(X)\ten_RRQ_1\ten_RRQ)\ast X$.
    For every $i\in\ZZ$, we have an exact sequence of $A\ten_RRQ$-modules
    \begin{align*}
        0\to \iota(X)\ten_RRQ_1^{-i}\ten_RRQ\to \iota(X)\ten_RRQ_1^{\le-i}\ten_RRQ\to \iota(X)\ten_RRQ_1^{<-i}\ten_RRQ\to0,
    \end{align*}
    and this gives 
    \begin{align*}
        \iota(X)\ten_RRQ_1^{\le-i}\ten_RRQ\in \Sigma^i \add\left(\iota(X)\ten_RRQ\right)\ast\left(\iota(X)\ten_RRQ_1^{<-i}\ten_RRQ\right).
    \end{align*}
    The claim follows by induction on $k$.
\end{proof}

We now state the key lemma.
\begin{lem}\label{lem:posi}
    Let $X\in\Dbfl(A\ten^\LL_RR/\m)$ and $Y\in\D^+(A\ten^\LL_RR/\m)$. If $\RHom_{A\ten^\LL_RR/\m}(X,Y)\in\D(R/\m)^{\ge0}$, then we have $\RHom_A(\iota(X),\iota(Y))\in\D(R)^{\ge0}$. In addition, $\iota\colon\Hom_{\D(A\ten^\LL_RR/\m)}(X,Y)\to\Hom_{\D(A)}(\iota(X),\iota(Y))$ is an isomorphism.
\end{lem}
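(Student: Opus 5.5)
The plan is to pass through the induction–restriction adjunction and then use \cref{lem:ind} to show that the extra terms contribute nothing to negative-degree morphisms. I work with the model $AQ=A\ten_RRQ$ of $A\ten^\LL_RR/\m$. Since $RQ$ is semifree over $R$, the forgetful functor $\iota$ has the left adjoint $-\ten_RRQ\colon\D(A)\to\D(AQ)$. This gives natural isomorphisms
\begin{align*}
\Hom_{\D(A)}(\iota(X),\Sigma^j\iota(Y))\simeq\Hom_{\D(AQ)}(\iota(X)\ten_RRQ,\Sigma^jY).
\end{align*}
Under these isomorphisms, the map induced by $\iota$ on $\Hom(X,\Sigma^jY)$ is precomposition with the counit $\varepsilon\colon\iota(X)\ten_RRQ\to X$. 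This counit is exactly the morphism occurring in the triangle $U\to\iota(X)\ten_RRQ\to X\to$ of \cref{lem:ind}. So both assertions reduce to showing $\Hom_{\D(AQ)}(U,\Sigma^jY)=0$ for all $j\le0$ and $\Hom_{\D(AQ)}(\Sigma U,\Sigma^jY)=0$ for all $j\le 0$. Here $U$ is the first term provided by \cref{lem:ind} for a suitably large $k$.

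Fix integers $b,c$ with $X\in\D^{\le b}$ and $Y\in\D^{\ge c}$; this is possible because $X$ has bounded cohomology and $Y\in\D^+$. The $t$-structures are the standard ones, which exist since $A$ and $AQ$ are non-positive ($Q$ is negatively graded). By the hypothesis, $\Hom(\Sigma^{m}X,\Sigma^jY)=0$ whenever $m-j>0$. In particular it vanishes for all $m\ge0$, $j<0$ and for all $m>0$, $j=0$, and hence for the $\Sigma^{>0}X$ part of $U$ and of $\Sigma U$ when $j\le0$. For the remaining generators $\Sigma^{k-i}(\iota(X)\ten_RRQ_1^{\le-i}\ten_RRQ)$ with $1\le i\le k$, I estimate cohomological degrees. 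Since $Q$ is locally finite, $RQ_1^{\le-i}$ is, in each degree, a finite free $R$-module concentrated in degrees $\le -i$. Hence $\iota(X)\ten_RRQ_1^{\le-i}$ lies in $\D(A)^{\le b-i}$. Applying $-\ten_RRQ$ (again an induction, so it stays in $\D^{\le b-i}$ because $RQ$ is concentrated in non-positive degrees) and shifting by $k-i$ puts this generator in $\D(AQ)^{\le b-k}$. Choosing $k>b-c+1$, the $t$-structure axiom gives $\Hom(\D^{\le b-k},\D^{\ge c-j})=0$ for all $j\le0$, and the same holds after one further shift for $\Sigma U$. Since $\Hom(-,\Sigma^jY)$ vanishing is closed under extensions, $\Hom(U,\Sigma^jY)=0=\Hom(\Sigma U,\Sigma^jY)$ for all $j\le0$.

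The long exact sequence of the triangle then yields two conclusions. For $j<0$, $\Hom(\iota X,\Sigma^j\iota Y)\simeq\Hom(X,\Sigma^jY)=0$, so $\RHom_A(\iota X,\iota Y)\in\D(R)^{\ge0}$. For $j=0$, precomposition with $\varepsilon$ is bijective, which is the claimed isomorphism induced by $\iota$.

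The main obstacle is the degree estimate for $\iota(X)\ten_RRQ_1^{\le-i}\ten_RRQ$. Here $RQ_1^{\le-i}$ may have infinitely many generators in total, so I must check that the tensor product, as an (infinite) direct sum of shifts, still lies in the aisle $\D^{\le b-i}$. It does, since aisles are closed under arbitrary coproducts. The second point to verify is that the quotient subcomplexes used in \cref{lem:ind} are compatible with these bounds, which follows from the filtration by arrow degree.
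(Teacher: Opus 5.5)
Your proposal is correct and is essentially the paper's argument: use the adjunction $-\ten_RRQ\dashv\iota$, the counit triangle $U\to\iota(X)\ten_RRQ\to X\to$ from \cref{lem:ind}, and show $\Hom(U,\Sigma^{\le0}Y)=0=\Hom(\Sigma U,\Sigma^{\le0}Y)$, using the hypothesis for the $\Sigma^{>0}X$ part and a $t$-structure bound for the remaining generators. Your choice of $k$ in terms of both the upper cohomological bound of $X$ and the lower bound of $Y$ makes explicit a step the paper leaves implicit, since its proof only invokes the lower bound on $Y$.
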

\begin{proof}
    There is a canonical isomorphism $\RHom_A(\iota(X),\iota(Y))\simeq\RHom_{A\ten^\LL_RR/\m}(\iota(X)\ten^\LL_RR/\m,Y)$.
    We fix an integer $i\ge0$, and show that the map
    \begin{align*}
        \Hom_{\D(A\ten^\LL_RR/\m)}(X,\Sigma^{-i}Y)\to\Hom_{\D(A\ten^\LL_RR/\m)}(\iota(X)\ten^\LL_RR/\m,\Sigma^{-i}Y)
    \end{align*}
    is an isomorphism. Since $Y$ is bounded below, $\Sigma^{-i}Y\in\D(A\ten^\LL_RR/\m)^{>-k}=(\D(A\ten^\LL_RR/\m)^{\le-k})^\bot$ for some $k>0$.
    By \cref{lem:ind}, there is an exact triangle
    \begin{align*}
        Z\to \iota(X)\ten^\LL_RR/\m\to X\to
    \end{align*}
    with $Z\in\Filt(\D(A\ten^\LL_RR/\m)^{\le-k},\Sigma^{>0}X)$.
    Since $\Hom_{\D(A\ten^\LL_RR/\m)}(\Sigma^{>0}X,\Sigma^{-i}Y)=0$, we have $\Hom_{\D(A\ten^\LL_RR/\m)}(\Sigma^{0,1}Z,\Sigma^{-i}Y)=0$, and so we obtain the desired isomorphism.
\end{proof}

\begin{prop}\label{prop:SMC}
    Let $\L=\{L_1,\ldots,L_n\}\subseteq\Dbfl(A\ten^\LL_RR/\m)$ be a subset. If $\L$ is a simple-minded collection in $\Dbfl(A\ten^\LL_RR/\m)$, then $\iota(\L)=\{\iota(L_1),\ldots,\iota(L_n)\}\subseteq\Dbfl(A)$ is a simple-minded collection in $\Dbfl(A)$.
\end{prop}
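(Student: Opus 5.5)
We need to check the three conditions of \cref{def:SMC} for $\iota(\L)$ in $\Dbfl(A)$. Note first that $\iota(L_i)\in\Dbfl(A)$, since the cohomology of $L_i$ is a finite-length $R/\m$-module, hence a finite-length $R$-module. Moreover each $L_i$ lies in $\D^b\subseteq\D^+$, so \cref{lem:posi} applies to any pair $X=L_i$, $Y=\Sigma^{j}L_l$.

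\textbf{Hom-vanishing and semibrick.} Since $\L$ is a simple-minded collection, $\RHom_{A\ten^\LL_RR/\m}(L_i,L_l)\in\D(R/\m)^{\ge0}$ for all $i,l$. By \cref{lem:posi} this gives $\RHom_A(\iota(L_i),\iota(L_l))\in\D(R)^{\ge0}$, which is exactly $\Hom_{\D(A)}(\iota\L,\Sigma^{<0}\iota\L)=0$. The second part of \cref{lem:posi}, applied with $Y=L_l$, says that $\iota$ induces an isomorphism $\Hom(L_i,L_l)\simeq\Hom(\iota L_i,\iota L_l)$. This map is compatible with composition because $\iota$ is a functor. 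Hence $\End(\iota L_i)$ is a division ring, and $\Hom(\iota L_i,\iota L_l)=0$ for $i\neq l$. The same isomorphism also shows that the $\iota(L_i)$ are pairwise non-isomorphic.

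\textbf{Generation.} This is the main obstacle: we must show $\thick\iota(\L)=\Dbfl(A)$. My plan is to show that $\S_A\subseteq\thick\iota(\L)$. Since $H^0(A)$ is module-finite over $R$, every object of $\Dbfl(A)$ has finite-length cohomology in finitely many degrees, so it lies in $\Filt$ of shifts of objects of $\Dbfl(A)^0=\fl H^0(A)$. Each such module is filtered by simples, and the simples are killed by $\m$. It therefore suffices to express each $S\in\S_A$ through $\iota(\L)$.

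Each $S\in\S_A$ is a module over $H^0(A)/\m H^0(A)=H^0(A\ten^\LL_RR/\m)$, because $H^0$ is right exact. So $S=\iota(S')$, where $S'$ is the corresponding simple object in the standard heart of $\Dbfl(A\ten^\LL_RR/\m)$; this uses that $A\ten^\LL_RR/\m$ is non-positive with $H^0$ finite-dimensional. Since $\L$ generates, $S'\in\thick\L$. Because $\iota$ is a triangulated functor, $S=\iota(S')\in\thick\iota(\L)$.

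The only care needed is in the identification $\iota(S')\simeq S$, namely that the heart objects of $A\ten^\LL_RR/\m$ restrict to the corresponding $H^0(A)$-modules. This follows because the forgetful functor commutes with cohomology and the $H^0$-action factors through $H^0(A)\to H^0(A\ten^\LL_RR/\m)$.
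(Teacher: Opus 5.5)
Your proof is correct and follows the paper's argument. In both, the Hom-vanishing and semibrick conditions come from \cref{lem:posi} applied to $X=L_i$, $Y=L_l$, and generation comes from $\S_A\simeq\iota(\S_{A\ten^\LL_RR/\m})\subseteq\iota(\thick\L)\subseteq\thick\iota(\L)$. You also spell out details the paper leaves implicit: the identification of simples via $H^0(A\ten^\LL_RR/\m)\simeq H^0(A)/\m H^0(A)$, and the fact that $\iota$ respects composition.
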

\begin{proof}
    Since $\S_{A\ten^\LL_RR/\m}\subseteq\thick\S$, 
    \begin{align*}
        \S_A\simeq\iota(\S_{A\ten^\LL_RR/\m})\subseteq\iota(\thick\L)\subseteq\thick\iota(\L).
    \end{align*}
    Therefore, $\iota(\L)$ generates $\Dbfl(A)$ as a thick subcategory. By applying \cref{lem:posi} to $X=L_i$ and $Y=L_j$ for $1\le i,j\le n$, one verifies that $\{\iota(L_1),\ldots,\iota(L_n)\}\subseteq\Dbfl(A)$ is a simple-minded collection. 
\end{proof}

%%%%%%%%%%%%%%%%%%%%%%%%%%%%%%%%%%%%%%%%%%%%%%%%%%%%%%%%%%%%%%%%%%%%%%%

\subsection{Bijections}

In this subsection, we prove the main theorems.
\begin{thm}\label{thm:bij-fib}
    The following diagram is commutative:
    \begin{center}
        \begin{tikzcd}
            \silt(\per(A))\ar[rr,"\text{\cref{prop:silt}}"{yshift=2pt},"-\ten^\LL_RR/\m"'] & & \silt(\per(A\ten^\LL_RR/\m))\ar[dd,leftrightarrow,"\text{ \cref{thm:ST}}","\phi"'] \\
            & & \\
            \SMC(\Dbfl(A))\ar[uu,leftrightarrow,"\text{\cref{thm:ST} }","\phi"'] & & \SMC(\Dbfl(A\ten^\LL_RR/\m))\ar[ll,"\text{\cref{prop:SMC}}"{yshift=-2pt},"\iota(-)"'].
        \end{tikzcd}
    \end{center}
    In particular, the map $-\ten^\LL_RR/\m\colon\silt(\per(A))\to\silt(\per(A\ten^\LL_RR/\m))$ is a bijection.
\end{thm}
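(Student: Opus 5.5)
We need to show $\phi(M) \simeq \iota(\phi(M\ten^\LL_RR/\m))$ for every $M\in\silt(\per(A))$. Once this holds, the bijection statement follows formally: both vertical maps are bijections by \cref{thm:ST}, so $-\ten^\LL_RR/\m$ equals the composite $\phi^{-1}\circ\iota\circ\phi$ in reverse order of arrows. In particular it is injective. Commutativity for every $M$ then gives surjectivity. Indeed, for any $N\in\silt(\per(A\ten^\LL_RR/\m))$, set $M:=\phi^{-1}(\iota(\phi(N)))$. Then $\phi(M)=\iota(\phi(N))$, and commutativity yields $\iota(\phi(M\ten^\LL_RR/\m))=\iota(\phi(N))$. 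So we also need $\iota$ to be injective on isomorphism classes of simple-minded collections. This follows from the second assertion of \cref{lem:posi}: $\iota$ is fully faithful on pairs of objects of an SMC. A cleaner route is to show the composite $\phi^{-1}\circ\iota\circ\phi$ and $-\ten^\LL_RR/\m$ are mutually inverse, using that $\phi$ is bijective and that $\iota$ determines the collection via the characterization below.

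For commutativity, fix $M$ silting over $A$ and put $\L:=\phi(M\ten^\LL_RR/\m)$. This is the simple-minded collection in $\Dbfl(A\ten^\LL_RR/\m)$ characterized by $\RHom_{A\ten^\LL_RR/\m}(M\ten^\LL_RR/\m,\L)\in\D(k)^0$. By \cref{prop:SMC}, $\iota(\L)$ is a simple-minded collection in $\Dbfl(A)$. By the characterization in \cref{prop:silt-SMC}(1), it then suffices to show $\RHom_A(M,\iota(\L))\in\D(R)^0$. The adjunction between $-\ten^\LL_RR/\m$ and the forgetful functor gives
\begin{align*}
\RHom_A(M,\iota(L))\simeq\RHom_{A\ten^\LL_RR/\m}(M\ten^\LL_RR/\m,L),
\end{align*}
viewed as an $R$-complex via restriction. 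Its cohomology is concentrated in degree $0$, so it lies in $\D(R)^0$. This yields $\phi(M)\simeq\iota(\L)$.

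The main point to check carefully is this adjunction isomorphism at the dg level. We need $\RHom_A(M,\iota(L))\simeq\RHom_{AQ}(M\ten_A AQ,L)$ with $AQ=A\ten_R RQ$ the model of $A\ten^\LL_RR/\m$, and compatibility with the forgetful functor along $A\to AQ$. This is standard extension/restriction of scalars, and $M$ is perfect, so there are no convergence issues. Injectivity of $\iota$ on $\SMC$ is also needed for the bijection, if one argues by surjectivity. I would instead note that injectivity of $-\ten^\LL_RR/\m$ follows directly from $\phi$ being injective and the commutativity. Surjectivity needs, for given $N$, that $\phi(N)$ equals $\phi(M\ten^\LL_RR/\m)$. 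Both collections map under $\iota$ to $\phi(M)$, and \cref{lem:posi} shows $\iota$ reflects isomorphisms between objects with nonnegative $\RHom$. So the two simple-minded collections, each a set of objects with $\Hom$ in degrees $\ge0$ among themselves after checking their mutual $\RHom$, coincide up to isomorphism. Verifying this last step is the one I expect to require the most care.
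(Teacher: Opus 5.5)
\textbf{Gap: surjectivity.} Your proof of commutativity is the paper's argument: $\phi(M)\simeq\iota(\phi(M\ten^\LL_RR/\m))$ via the adjunction $\RHom_A(M,\iota(L))\simeq\RHom_{A\ten^\LL_RR/\m}(M\ten^\LL_RR/\m,L)$, \cref{prop:SMC}, and the uniqueness in \cref{prop:silt-SMC}(1). Injectivity of $-\ten^\LL_RR/\m$ follows from it. (Commutativity only gives $\phi=\iota\circ\phi\circ(-\ten^\LL_RR/\m)$, not a formula for $-\ten^\LL_RR/\m$ itself, since $\iota$ is not yet known to be invertible.)

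You reduce surjectivity to injectivity of $\iota$ on isomorphism classes of simple-minded collections, and you cite the second assertion of \cref{lem:posi} for this. That assertion holds only under the hypothesis $\RHom_{A\ten^\LL_RR/\m}(X,Y)\in\D(R/\m)^{\ge0}$. For $L\in\L$ and $L'\in\L'$ from two \emph{different} collections with $\iota(L)\simeq\iota(L')$, you have no such vanishing a priori. Being fully faithful on pairs inside one simple-minded collection says nothing about pairs across two collections. So the step ``the two collections coincide up to isomorphism'' is unproved, as you suspected. It can be rescued: a minimal-degree induction using the triangle $Z\to\iota(L)\ten^\LL_RR/\m\to L\to$ from the proof of \cref{lem:posi} shows that $\RHom_A(\iota(L),\iota(L'))\in\D(R)^{\ge0}$ forces $\RHom(L,L')\in\D^{\ge0}$. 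But that is extra work you have not supplied.

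\textbf{Fix: reverse the adjunction.} You do not need injectivity of $\iota$ as an input; run the same adjunction argument in the other direction, as the paper does. Given $N\in\silt(\per(A\ten^\LL_RR/\m))$, put $\S:=\phi(N)$ and $M:=\phi^{-1}(\iota(\S))\in\silt(\per(A))$. This is defined by \cref{prop:SMC} and bijectivity of $\phi$. Then
$\RHom_{A\ten^\LL_RR/\m}(M\ten^\LL_RR/\m,\S)\simeq\RHom_A(M,\iota(\S))\in\D(R)^0$.
Since $M\ten^\LL_RR/\m$ is silting by \cref{prop:silt}, the uniqueness in \cref{prop:silt-SMC}(1), applied over $A\ten^\LL_RR/\m$, gives $\phi(M\ten^\LL_RR/\m)\simeq\S=\phi(N)$. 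Hence $M\ten^\LL_RR/\m$ and $N$ are equivalent by injectivity of $\phi$. This is exactly the paper's identity $\phi^{-1}(\S)\simeq\phi^{-1}(\iota(\S))\ten^\LL_RR/\m$. Injectivity of $\iota$ on $\SMC$ then comes out as a consequence rather than being something you must prove. Your remark about a ``cleaner route'' points in this direction, but you did not carry it out and chose the \cref{lem:posi} route instead.
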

\begin{proof}
    Let $M\in\silt(\per(A))$. Then 
    \begin{align*}
        \RHom_A(M,\iota(\phi(M\ten^\LL_RR/\m)))\simeq\RHom_{A\ten^\LL_RR/\m}(M\ten^\LL_RR/\m,\phi(M\ten^\LL_RR/\m))\in\D(R/\m)^0.
    \end{align*}
    By \cref{prop:silt-SMC}, we have $\phi(M)\simeq\iota(\phi(M\ten^\LL_RR/\m))$. 

    A similar argument shows that $\phi^{-1}(\S)\simeq\phi^{-1}(\iota(\S))\ten^\LL_RR/\m$ for every $\S\in\SMC(\Dbfl(A\ten^\LL_RR/\m))$.
\end{proof}

\begin{thm}\label{thm:R-S}
    The assignment $M\mapsto M\ten^\LL_RS$ induces a bijection $\silt(\per(A))\to\silt(\per(A\ten^\LL_RS))$.
\end{thm}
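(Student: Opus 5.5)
The plan is to reduce everything to the derived fibers via \cref{thm:bij-fib}, using that $R\to S$ induces an isomorphism on residue fields. By \cref{prop:silt}, $-\ten^\LL_RS$ gives a well-defined map $\silt(\per(A))\to\silt(\per(A\ten^\LL_RS))$; it respects equivalence classes because $\add M=\add N$ implies $\add(M\ten^\LL_RS)=\add(N\ten^\LL_RS)$. Write $A':=A\ten^\LL_RS$. This is again a non-positive locally finitely generated dg $S$-algebra. Non-positivity is clear, since $\D(R)^{\le0}\ten^\LL_RS\subseteq\D(S)^{\le0}$. Local finite generation follows because $A$ is quasi-isomorphic as a complex of $R$-modules to a bounded-above complex of finitely generated free $R$-modules, so each $H^i(A')$ is a finitely generated $S$-module. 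Hence \cref{thm:bij-fib} applies to $A'$ over $(S,\n,k)$ as well.

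The key step is to compare the derived fibers. Since $S\ten^\LL_SS/\n=S/\n=k=R/\m$ as $R$-algebras, we get
\begin{align*}
A'\ten^\LL_SS/\n\simeq A\ten^\LL_R S\ten^\LL_S k\simeq A\ten^\LL_R k= A\ten^\LL_RR/\m
\end{align*}
as dg algebras, compatibly with modules:
\begin{align*}
(M\ten^\LL_RS)\ten^\LL_SS/\n\simeq M\ten^\LL_RR/\m
\end{align*}
for $M\in\per(A)$. Thus we obtain a commutative triangle of maps
\begin{align*}
\silt(\per(A))\xrightarrow{-\ten^\LL_RS}\silt(\per(A'))\xrightarrow{-\ten^\LL_SS/\n}\silt(\per(A\ten^\LL_RR/\m)),
\end{align*}
whose composite equals $-\ten^\LL_RR/\m$, identifying $\per(A'\ten^\LL_SS/\n)$ with $\per(A\ten^\LL_RR/\m)$ through the above dg-algebra quasi-isomorphism, which induces a derived equivalence. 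By \cref{thm:bij-fib} applied to $A$ and to $A'$, both the composite and the second map are bijections. Hence the first map $-\ten^\LL_RS$ is a bijection as well.

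The main obstacle is making the identification of the two derived fibers rigorous at the dg level. Concretely, one must choose models (e.g.\ a cofibrant replacement of $A$ over $R$, or the model $A\ten_RRQ$ from the previous subsection) so that $(A\ten^\LL_RS)\ten^\LL_Sk$ and $A\ten^\LL_Rk$ are quasi-isomorphic dg $k$-algebras, and so that the induced equivalence $\D(A'\ten^\LL_Sk)\simeq\D(A\ten^\LL_Rk)$ carries $(M\ten^\LL_RS)\ten^\LL_Sk$ to $M\ten^\LL_Rk$. I would first take $A$ to be K-flat over $R$ (possible by a semi-free resolution), so that all derived tensors become ordinary ones and associativity of $\ten$ gives the isomorphism on the nose. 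The common residue field hypothesis is exactly what makes $S\ten_Sk=k$ agree with $R/\m$ as an $R$-algebra.
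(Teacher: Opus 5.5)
Your proposal is correct and is essentially the paper's proof. The paper draws the same commutative square, with vertical bijections $-\ten^\LL_RR/\m$ and $-\ten^\LL_SS/\n$ from \cref{thm:bij-fib} and the bottom identification $\silt(\per(A\ten^\LL_RR/\m))=\silt(\per((A\ten^\LL_RS)\ten^\LL_SS/\n))$, and concludes that the top map $-\ten^\LL_RS$ is bijective. You also spell out two points the paper leaves implicit: that $A\ten^\LL_RS$ is again non-positive and locally finitely generated over $S$, so \cref{thm:bij-fib} applies to it; and that the two derived fibers agree as dg algebras, compatibly with modules, which is where the common residue field hypothesis is used.
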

\begin{proof}
    The following diagram is commutative:
    \begin{center}
        \begin{tikzcd}
            \silt(\per(A))\ar[r,"-\ten^\LL_RS"',"\text{\cref{prop:silt}}"{yshift=2pt}]\ar[dd,leftrightarrow,"\text{\cref{thm:bij-fib} }"',"-\ten_RR/\m"] & \silt(\per(A\ten^\LL_RS))\ar[dd,leftrightarrow,"\text{ \cref{thm:bij-fib}}","-\ten_SS/\n"'] \\
            & & \\
            \silt(\per(A\ten^\LL_RR/\m))\ar[r,equal] & \silt(\per(A\ten^\LL_SS/\n)).
        \end{tikzcd}
    \end{center}
    Consequently, the extension of scalars functor induces the desired bijection.
\end{proof}

\begin{eg}
    Let $k$ be a field. We put $R=k[\![x]\!]$ and $\Lambda=\begin{bmatrix}
        k[\![x]\!] & 0 \\
        k[\![x]\!]\s(x^n) & k[\![x]\!]
    \end{bmatrix}$ for $n\ge1$. Then $\Lambda$ is quasi-isomorphic to $RQ$, where $Q$ is given by \begin{tikzcd}
            1\ar[r,"\alpha", shift left=1.0ex]\ar[r,"\beta"', shift right=1.0ex] & 2
        \end{tikzcd} with $|\alpha|=0$, $|\beta|=-1$ and $d(\beta)=\alpha x^n$. Therefore, $\Lambda\ten^\LL_RR/\m$ is quasi-isomorphic to $kQ'$, where $Q'$ is given by \begin{tikzcd}
            1\ar[r,"\alpha'", shift left=1.0ex]\ar[r,"\beta'"', shift right=1.0ex] & 2
        \end{tikzcd} with $|\alpha'|=0$, $|\beta'|=-1$ and trivial differentials. \cite{KYZ} shows that $\#\left(\ind\D^{[-n,0]}_{\text{fl}}(\Lambda\ten^\LL_RR/\m)\right)$ is finite, and so $\Lambda\ten^\LL_RR\s\m$ is silting-discrete. \cref{thm:bij-fib} implies that $\Lambda$ is also silting-discrete.
\end{eg}

\begin{rmk}\label{rmk:poset}
    The bijection in \cref{thm:R-S} preserves poset structure (see \cite{AI12}) by \cref{lem:nonposi}.
\end{rmk}

\begin{rmk}
    Even without assuming that the residue field of $S$  coincides with that of $R$,  the above arguments show that $\silt(\per(A))\to\silt(\per(A\ten^\LL_RS))$ is injective if $S$ is not zero. However, this map is not surjective in general.
\end{rmk}

\begin{eg}
    Let $R=\FF_2$, $\Lambda=\FF_4$ and $S=\FF_4$. Then $\Lambda\ten^\LL_RS=\FF_4\ten_{\FF_2}\FF_4\simeq\FF_4\times\FF_4$. The map $\silt(\FF_4)\to\silt(\FF_4\times\FF_4)$ is not surjective.
\end{eg}

%%%%%%%%%%%%%%%%%%%%%%%%%%%%%%%%%%%%%%%%%%%%%%%%%%%%%%%%%%%%%%%%%%%%%%%%%%%%%%%%%%%%%%%%
\section{Relation to Previous Results}\label{section:relation}
In this section, we fix a noetherian $R$-algebra $\Lambda$. We examine the relationship between our results and previous results.

Kimura \cite{K} constructed a bijection between two-term silting complexes over $\Lambda$ and two-term silting complexes over $\Lambda/I$ for every ideal $I\subseteq\m\Lambda$. \cref{thm:bij-fib} recovers \cite[Theorem 1.5]{K}, as we now show.

\begin{thm}\cite[Theorem 1.5]{K} 
    Let $I\subseteq \m\Lambda$ be a two-sided ideal. The assignment $M\mapsto M\ten^\LL_\Lambda \Lambda/I$ induces a bijection between isomorphism classes of two-term silting complexes over $\Lambda$ and those over $\Lambda/I$.
\end{thm}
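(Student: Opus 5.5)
The plan is to deduce Kimura's statement from \cref{thm:bij-fib} by comparing both $\Lambda$ and $\Lambda/I$ with the common derived fiber. Since $I\subseteq\m\Lambda$, the ring $\Lambda/I$ is again a noetherian $R$-algebra, hence a locally finitely generated non-positive dg $R$-algebra. Applying \cref{thm:bij-fib} to $\Lambda$ and to $\Lambda/I$ gives bijections
\begin{align*}
\silt(\per(\Lambda))\xrightarrow{-\ten^\LL_RR/\m}\silt(\per(\Lambda\ten^\LL_RR/\m)),\qquad
\silt(\per(\Lambda/I))\xrightarrow{-\ten^\LL_RR/\m}\silt(\per((\Lambda/I)\ten^\LL_RR/\m)).
\end{align*}
The naive approach would compare the two fibers directly, but $\Lambda\ten^\LL_RR/\m$ and $(\Lambda/I)\ten^\LL_RR/\m$ are genuinely different dg algebras. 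The restriction to two-term complexes is what should rescue the argument.

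\textbf{Step 1 (well-definedness).} For $M$ two-term silting over $\Lambda$, the complex $M\ten^\LL_\Lambda\Lambda/I$ is a two-term complex of projectives over $\Lambda/I$ generating $\per(\Lambda/I)$. Its $\Hom$ in degree $1$ is computed by a surjection argument: $\Hom(M,\Sigma M\ten\Lambda/I)$ is a quotient of $\Hom(M,\Sigma M)=0$, because the terms of $M$ are projective and $M$ is two-term. So the map is defined. It is compatible with the maps of \cref{thm:bij-fib}, since both composites send $M$ to complexes over the truncation $H^0$ of the respective fibers, which agree.

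\textbf{Step 2 (reduction to $H^0$).} I will use the following principle. For a non-positive dg algebra $C$, two-term silting objects in $\per(C)$ correspond bijectively to two-term silting objects over $H^0(C)$ via $-\ten^\LL_CH^0(C)$. This is the standard fact that two-term silting only sees $H^0$. Alternatively, via \cref{thm:ST}, they correspond to simple-minded collections lying in $\Filt(\S_C)\ast\Sigma\Filt(\S_C)$, and $\S_C$ depends only on $H^0(C)$.

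\textbf{Step 3 (conclusion).} Note that
\begin{align*}
H^0(\Lambda\ten^\LL_RR/\m)=\Lambda/\m\Lambda=H^0((\Lambda/I)\ten^\LL_RR/\m),
\end{align*}
where the second equality uses $I\subseteq\m\Lambda$. The bijection of \cref{thm:bij-fib} preserves two-termness: it preserves the poset structure by \cref{rmk:poset} and sends $\Lambda$ to the fiber, and two-term means lying between the algebra and its shift. Hence Steps 1 and 2 give a chain of bijections
\begin{align*}
\text{2-silt}(\Lambda)\simeq\text{2-silt}(\Lambda\ten^\LL_RR/\m)\simeq\text{2-silt}(\Lambda/\m\Lambda)\simeq\text{2-silt}((\Lambda/I)\ten^\LL_RR/\m)\simeq\text{2-silt}(\Lambda/I).
\end{align*}
By compatibility, this composite is $-\ten^\LL_\Lambda\Lambda/I$.

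The main obstacle is Step 2, the statement that two-term silting over a non-positive dg algebra $C$ is controlled by $H^0(C)$, together with checking that the composite equals $-\ten_\Lambda\Lambda/I$ on the nose. For Step 2 I would first try the simple-minded side: the two-term condition on $M$ translates into $\phi(M)\subseteq\Filt(\S_C)\ast\Sigma\Filt(\S_C)$, and both this condition and the relevant $\Hom$ groups are computed inside $\D^{[-1,0]}$, where $C$ and $H^0(C)$ agree.
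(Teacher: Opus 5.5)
Your proposal is correct and follows essentially the paper's route: you restrict the bijection of \cref{thm:bij-fib} to two-term objects via \cref{rmk:poset}, then pass to $H^0$ of the derived fiber using the fact in your Step 2, which is exactly \cite[Proposition A.3]{BY13}. The only difference is that you handle a general $I$ by running the argument for both $\Lambda$ and $\Lambda/I$ and composing (Step 1 supplies well-definedness and compatibility); this is the unwound content of the paper's one-line reduction ``we may assume $I=\m\Lambda$''.
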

\begin{proof}
    We may assume that $I=\m\Lambda$.
    The bijection in \cref{thm:bij-fib} preserves poset structure by \cref{rmk:poset}. Therefore, it restricts to a bijection between two-term silting complexes over $\Lambda$ and those over $\Lambda\ten^\LL_RR/\m$. By \cite[Proposition A.3]{BY13}, the set of isomorphism classes of two-term silting complexes over $\Lambda\ten^\LL_RR/\m$ is naturally identified with that of $H^0(\Lambda\ten^\LL_RR/\m)=\Lambda\ten_RR/\m$, which completes the proof.
\end{proof}

The morphism $\Lambda\ten^\LL_RS\to\Lambda\ten_RS$ is a quasi-isomorphism if $\Tor^R_{>0}(\Lambda,S)=0$. Therefore, the following theorem generalizes the bijection of silting objects given in \cite[Theorem A, Theorem 6.5]{G}. 
\begin{thm}
    Let $(R,\m,k)\to(S,\n,k)$ be a morphism of commutative complete local noetherian rings. 
    If $\Tor^R_{>0}(\Lambda,S)=0$, then the map $-\ten^\LL_RS\colon\silt(\per(\Lambda))\to\silt(\per(\Lambda\ten_RS))$ is a bijection.
\end{thm}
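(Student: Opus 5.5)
The statement follows by combining \cref{thm:R-S} with the observation that, under the Tor-vanishing hypothesis, the derived base change is quasi-isomorphic to the ordinary one.

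First, regard the noetherian $R$-algebra $\Lambda$ as a dg $R$-algebra concentrated in degree $0$. It is non-positive, and it is locally finitely generated since $\Lambda\in\mod R$. So \cref{thm:R-S} applies with $A=\Lambda$. It gives a bijection
\[
-\ten^\LL_RS\colon\silt(\per(\Lambda))\to\silt(\per(\Lambda\ten^\LL_RS)).
\]

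Next, identify the target. The natural morphism of dg $S$-algebras $\Lambda\ten^\LL_RS\to\Lambda\ten_RS$ is induced from a projective resolution $P\to\Lambda$ over $R$ (or a cofibrant dg-algebra model) via $P\ten_RS\to\Lambda\ten_RS$. On cohomology this map is $\Tor^R_{-i}(\Lambda,S)\to$ (its degree-zero part). Hence, when $\Tor^R_{>0}(\Lambda,S)=0$, it is a quasi-isomorphism. Restriction and extension of scalars along a quasi-isomorphism of dg algebras give a triangle equivalence $\D(\Lambda\ten^\LL_RS)\simeq\D(\Lambda\ten_RS)$ sending the free module to the free module. This equivalence therefore restricts to an equivalence $\per(\Lambda\ten^\LL_RS)\simeq\per(\Lambda\ten_RS)$, which induces a bijection on $\silt$.

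Finally, compose the two bijections. The composite sends $M\in\silt(\per(\Lambda))$ to $M\ten^\LL_\Lambda(\Lambda\ten_RS)\simeq M\ten^\LL_RS$, regarded as a $\Lambda\ten_RS$-module. This is exactly the map in the statement.

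The only point needing care is functoriality. One must make sure that the model of $\Lambda\ten^\LL_RS$ used in \cref{thm:R-S} admits a dg-algebra map to $\Lambda\ten_RS$ compatible with the module-level tensor product. I would handle this by choosing a cofibrant replacement of $\Lambda$ as a dg $R$-algebra; since $R$ is commutative, such a replacement is flat over $R$, so tensoring it with $S$ computes $\Lambda\ten^\LL_RS$. Everything else is formal.
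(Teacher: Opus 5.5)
Your proposal is correct and follows the paper's route: view $\Lambda$ as a non-positive locally finitely generated dg $R$-algebra, apply \cref{thm:R-S}, and identify $\per(\Lambda\ten^\LL_RS)\simeq\per(\Lambda\ten_RS)$ through the quasi-isomorphism $\Lambda\ten^\LL_RS\to\Lambda\ten_RS$, which holds because $\Tor^R_{>0}(\Lambda,S)=0$. Your extra care in choosing a cofibrant model of $\Lambda$, so that the comparison map is a morphism of dg algebras, is a detail the paper leaves implicit; strictly, the model is K-flat over $R$ because cofibrant dg $R$-algebras are cofibrant as underlying complexes, not because $R$ is commutative.
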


\bibliographystyle{alpha}
\bibliography{my}

\end{document}